\documentclass[leqno]{article}
\usepackage{amsmath}
\usepackage{amssymb}
\usepackage{amsthm}
\textwidth=6.truein \textheight=9.truein \hoffset=-.5truein
\voffset=-.8truein

\def\C{{\mathbb{C}}}

\def\N{{\mathbb{N}}}
\def\Z{{\mathbb{Z}}}

\def\ppmod{\mkern-16mu \pmod}
\def\RE{\operatorname{Re}}

\theoremstyle{plain}
\newtheorem{theorem}{Theorem}[section]
\newtheorem{corollary}{Corollary}[section]
\newtheorem{lemma}{Lemma}[section]

\theoremstyle{definition}

\newtheorem{example}{Example}[section]

\theoremstyle{remark}
\newtheorem{remark}{Remark}[section]

\begin{document}

\title{An analogue of Ramanujan's sum with respect to\\ regular integers (mod $r$)}
\author{Pentti Haukkanen and L\'aszl\'o T\'oth}
\maketitle

\medskip
\noindent
{\bf Abstract}\ An integer $a$  is said to be regular (mod $r$)
if there exists an integer $x$ such that $a^2x\equiv a\pmod{r}$. 
In this paper we introduce 
an analogue of Ramanujan's sum with respect to regular integers (mod $r$)
and show that this analogue possesses properties similar to 
those of the usual Ramanujan's sum. 

\medskip
\noindent
{\bf Key words}\ Ramanujan's sum, regular integer, arithmetical convolution, even function, discrete Fourier
transform, multiplicative function, mean value, Dirichlet series

\medskip
\noindent
{\bf Mathematics Subject Classification}\ 11A25, 11L03

\section{Introduction}

An element $a$ in a ring $R$ is said to be regular (following von Neumann)
if there exists  $x\in R$ such that $axa=a$.
An integer $a$  is said to be regular (mod $r$)
if there exists an integer $x$ such that $a^2x\equiv a\pmod{r}$. 
A regular integer (mod $r$) is regular in the ring $\Z_r$ in the sense of von Neumann. 

An integer $a$ is invertible (mod $r$) if $(a, r)=1$. 
It is clear that each invertible integer (mod $r$) is regular (mod $r$). 
Euler's function $\phi(r)$ counts the number of invertible integers (mod $r$). 
The function $\varrho(r)$ counts the number of regular integers (mod $r$) between $1$ and $r$ and 
thus $\varrho(r)$ is an analogue of Euler's $\phi$-function with respect to  regular integers (mod $r$). 
Regular integers (mod $r$) and the function $\varrho(r)$ have been studied, 
for example, in \cite{AO,Tot2008,To}. 

Ramanujan's \cite{R} sum $c_r(n)$ is defined as
\begin{equation}
c_r(n)=\sum_{\substack{a \ppmod r\\ (a, r)=1}}\exp(2\pi ian/r). 
\end{equation}
In this paper we introduce 
an analogue of Ramanujan's sum with respect to regular integers (mod $r$)
and show that this analogue possesses properties similar to 
those of the usual Ramanujan's sum. 
We study convolutional expressions, even functions (mod $r$), discrete Fourier transform, 
identities, multiplicativity, mean values and Dirichlet series. 
Throughout the paper we compare the properties of this analogue of Ramanujan's sum with 
the properties of the usual Ramanujan's sum. 
Analogues of Ramanujan's sum with respect to regular integers (mod $r$) have not hitherto been studied 
in the literature. 


\section{Definition and convolutional expressions}

We define the analogue of Ramanujan's sum with respect regular integers (mod $r$) as
\begin{equation}
\overline{c}_r(n) 
= \sum_{
    \substack{a \ppmod r\\
              a \text{ regular}\ppmod r}}
    \exp(2 \pi i a n / r), 
\end{equation}
where $n\in\Z$ and $r\in\N$. 

The usual Ramanujan's sum can be written as 
\begin{equation}
c_r(n)=\sum_{d \mid (n,r)} d\mu(r/d),  
\end{equation}
which may be considered a convolutional expression of 
$c_r(n)$ with respect to $n$ or $r$.
Convolutional expressions of 
$\overline{c}_r(n)$ with respect to $n$ and $r$ are presented in 
Theorems \ref{th:conv-n} and \ref{th:conv-r}. 
In these expressions we need the concept of a unitary divisor. 
A divisor $d$ of $n$ is said to be a unitary divisor of $n$ 
(written as $d\| n$) if $(d, n/d)=1$. 
The unitary convolution of arithmetical functions $f$ and $g$ is defined as 
\begin{equation}
(f\oplus g)(n)=\sum_{d\| n} f(d) g(n/d). 
\end{equation}
For material on unitary convolution we refer to \cite{Cohen60,M,SC,Si}. 
We will use that $a$ is regular $\pmod r$ if and only if $(a, r)\Vert r$.

\medskip

Let $r\in\N$ be fixed. 
Let $g_{r}$ denote the characteristic function of the unitary divisors of $r$, 
that is, $g_{r}(n)=1$ if $n\| r$,
and $g_{r}(n)=0$ otherwise.
Then $g_{r}(n)$ is multiplicative in $n$.
Let  $\overline{\mu}_{r}$ denote the function defined by 
\begin{equation}\label{eq:mu-r}
(\overline{\mu}_{r} * 1)(n) = g_{r}(n), 
\end{equation}
where $*$ is the Dirichlet convolution and $1(n)=1$ for all $n\in\N$.
Then $\overline{\mu}_{r}(n)$ is multiplicative in $n$ given as follows:\\
(i)\ If $p\| r$, then $\overline{\mu}_{r}(p)=0$,
$\overline{\mu}_{r}(p^2)=-1$, $\overline{\mu}_{r}(p^j)=0 $ for $j \geq 3$.\\
(ii)\ If $p^a\| r$ with $a \geq 2$, then $\overline{\mu}_{r}(p)=-1$, 
$\overline{\mu}_{r}(p^a)=1$, $\overline{\mu}_{r}(p^{a+1})=-1$, $\overline{\mu}_{r}(p^j) = 0$    
for $j \neq 0,1,a,a+1$.\\
(iii)\ If $p\nmid r$, then 
$\overline{\mu}_{r}(p)=-1$, $\overline{\mu}_{r}(p^j)=0$ for $j\geq 2$.\\
In addition, $\overline{\mu}_{r}(1)=1$.  
 
\begin{theorem}\label{th:conv-n}
\begin{equation*}
\overline{c}_r(n)  = \sum_{d | (n, r)} d \overline{\mu}_{r}(r/d).  
\end{equation*}
\end{theorem}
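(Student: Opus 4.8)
The plan is to reduce $\overline{c}_r(n)$ to a sum of ordinary Ramanujan sums and then match the outcome against the stated convolution. \emph{First}, since $a$ is regular $\pmod r$ exactly when $(a,r)\Vert r$, I would partition the defining sum for $\overline{c}_r(n)$ according to the common divisor $e=(a,r)$, which then runs over the unitary divisors of $r$. Writing $a=eb$ with $1\le b\le r/e$ and $(b,r/e)=1$ gives $\exp(2\pi ian/r)=\exp(2\pi ibn/(r/e))$, so the inner sum over such $b$ equals $c_{r/e}(n)$; and since $e\Vert r\iff r/e\Vert r$, this yields
\[
\overline{c}_r(n)=\sum_{e\Vert r}c_{r/e}(n)=\sum_{d\Vert r}c_d(n).
\]

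\emph{Second}, I would substitute the classical identity $c_d(n)=\sum_{\delta\mid(n,d)}\delta\mu(d/\delta)$ and interchange the order of summation. The resulting (finite) double sum runs over pairs $(d,\delta)$ with $d\Vert r$, $\delta\mid d$ and $\delta\mid n$. Grouping by $\delta$ first, the admissible values of $\delta$ are precisely the divisors of $(n,r)$ — given $\delta\mid(n,r)$, the integer $\prod_{p\mid\delta}p^{v_p(r)}$ is a unitary divisor of $r$ divisible by $\delta$ — and for each such $\delta$ the substitution $d=\delta k$ is a bijection from $\{d:\delta\mid d\Vert r\}$ onto $\{k\ge1:\delta k\Vert r\}$. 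Hence
\[
\overline{c}_r(n)=\sum_{\delta\mid(n,r)}\delta\sum_{\substack{k\ge1\\ \delta k\Vert r}}\mu(k).
\]

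\emph{Third}, it remains to prove $\sum_{k:\,\delta k\Vert r}\mu(k)=\overline{\mu}_r(r/\delta)$ for every $\delta\mid r$. Möbius inversion of the defining relation $\overline{\mu}_r*1=g_r$ gives $\overline{\mu}_r=g_r*\mu$, that is, $\overline{\mu}_r(m)=\sum_{j\mid m,\ j\Vert r}\mu(m/j)$. Taking $m=r/\delta$ and setting $k=r/(\delta j)$, one verifies that $j\mapsto k$ is a bijection from $\{j:j\mid r/\delta,\ j\Vert r\}$ onto $\{k:\delta k\Vert r\}$: indeed $r/j=\delta k$, so the condition $\gcd(j,r/j)=1$ is equivalent to $\gcd(\delta k,r/(\delta k))=1$, i.e.\ to $\delta k\Vert r$. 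Since $\mu(m/j)=\mu(k)$, this gives the required identity, and combining it with the previous display proves the theorem. (Alternatively, this last step may be carried out by a direct prime-power computation: $\sum_{k:\,\delta k\Vert r}\mu(k)$ factors over primes and $\overline{\mu}_r$ is multiplicative, so one just compares local factors against the explicit description (i)--(iii) of $\overline{\mu}_r$, running through the cases $v_p(r)=0$, $v_p(r)=1$, and $v_p(r)\ge2$.)

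The bulk of the work lies in the bookkeeping of the second and third steps: pinning down exactly which $\delta$ occur and confirming that the two changes of variables are genuine bijections. The first step and, once $\overline{\mu}_r=g_r*\mu$ is available, the remaining algebra are routine.
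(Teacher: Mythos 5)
Your proof is correct, but it takes a genuinely different route from the paper's. The paper proves the identity in one pass directly from the definition: it writes the regularity condition as $g_r((a,r))$, expands $g_r=\overline{\mu}_r * 1$ inside the exponential sum, interchanges summation, and finishes with the orthogonality relation $\sum_{m=1}^{d}\exp(2\pi i m n/d)=d$ if $d\mid n$ and $0$ otherwise — no properties of the classical Ramanujan sum are needed beyond its definition. You instead first establish $\overline{c}_r(n)=\sum_{d\Vert r}c_d(n)$ by grouping terms according to $(a,r)=e\Vert r$ (this is exactly the paper's Theorem \ref{th:conv-r}, proved later, and your grouping argument is the alternative proof the paper mentions there), and then convert that into the stated convolution by inserting $c_d(n)=\sum_{\delta\mid(n,d)}\delta\mu(d/\delta)$ and verifying $\sum_{k:\,\delta k\Vert r}\mu(k)=\overline{\mu}_r(r/\delta)$ via $\overline{\mu}_r=g_r*\mu$; I checked your two changes of variables and they are indeed bijections, so the argument is sound. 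What your route buys is the unitary-divisor decomposition of Theorem \ref{th:conv-r} as a byproduct and a treatment of $\overline{\mu}_r$ purely through Möbius inversion of its defining relation; what the paper's route buys is brevity and self-containedness, since it avoids the classical formula for $c_d(n)$ and all of the bijection bookkeeping that, as you note, constitutes the bulk of your work.
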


\begin{proof}
Since $a$ is regular $\pmod r$ if and only if $(a, r)\Vert r$, see \cite{Tot2008}, we have 
\begin{eqnarray*}
\overline{c}_r(n)
&=& \sum_{a=1}^r \exp(2 \pi ian/r)g_{r}((a, r)) \\
&=& \sum_{a=1}^r \exp(2 \pi ian/r)\sum_{d\mid (a,r)}\overline{\mu}_{r}(d) \\
&=& \sum_{d\mid r}\overline{\mu}_{r}(d)\sum_{a=1\atop d\mid a}^r \exp(2 \pi ian/r) \\
&=& \sum_{d\mid r}\overline{\mu}_{r}(d)\sum_{m=1}^{r/d} \exp(2 \pi imdn/r) \\
&=& \sum_{d\mid r}\overline{\mu}_{r}(r/d)\sum_{m=1}^{d} \exp(2 \pi imn/d) \\
&=& \sum_{d\mid r\atop d\mid n}\overline{\mu}_{r}(r/d)d.
\end{eqnarray*}
\end{proof}

\begin{remark}\label{re:conv-n}
Let $\eta_r$ denote the arithmetical function defined as $\eta_r(n)=n$ if $n\mid r$, 
and $\eta_r(n)=0$ otherwise. 
Theorem \ref{th:conv-n} can be written as 
\begin{equation*}
\overline{c}_r(n)  =  [\eta_r(\cdot) \overline{\mu}_{r}\big(r/(\cdot)\big) * 1(\cdot)] (n).    
\end{equation*}
\end{remark}

\begin{remark}
Theorem \ref{th:conv-n} gives a convolutional expression of 
$\overline{c}_r(n)$ with respect to the variable $n$. 
\end{remark} 

\begin{corollary}\label{th:conv-n-sum} For $n\in\N$, 
\begin{equation*}
\sum_{d\mid n}\overline{c}_r(d)  = \sum_{d\mid (n, r)} d \overline{\mu}_{r}(r/d)\tau(n/d),   
\end{equation*}
where $\tau(n)$ is the number of divisors of $n$. 
\end{corollary}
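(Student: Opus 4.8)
The plan is to start from Theorem \ref{th:conv-n}, which gives $\overline{c}_r(d) = \sum_{e \mid (d,r)} e\,\overline{\mu}_r(r/e)$ for every $d$, and then sum over the divisors of $n$. Writing
\[
\sum_{d\mid n}\overline{c}_r(d) = \sum_{d\mid n}\ \sum_{e\mid (d,r)} e\,\overline{\mu}_r(r/e),
\]
I would interchange the order of summation: the pair $(d,e)$ ranges over all $e$ with $e\mid r$ and $e\mid d\mid n$. Fixing $e$ first, $e$ must divide $(n,r)$, and then $d$ runs over the multiples of $e$ that divide $n$, i.e. $d = ef$ with $f \mid n/e$. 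Hence
\[
\sum_{d\mid n}\overline{c}_r(d) = \sum_{e\mid (n,r)} e\,\overline{\mu}_r(r/e) \sum_{f\mid n/e} 1 = \sum_{e\mid (n,r)} e\,\overline{\mu}_r(r/e)\,\tau(n/e),
\]
which is exactly the claimed formula (after renaming $e$ back to $d$).

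Alternatively, and perhaps more cleanly, I would phrase this via Dirichlet convolution using Remark \ref{re:conv-n}: that remark identifies $\overline{c}_r = h_r * 1$, where $h_r(n) = \eta_r(n)\,\overline{\mu}_r(r/n)$. Then $\sum_{d\mid n}\overline{c}_r(d) = (\overline{c}_r * 1)(n) = h_r * 1 * 1 = h_r * \tau$, and unwinding $h_r * \tau$ at $n$ gives $\sum_{d\mid n} h_r(d)\,\tau(n/d) = \sum_{d\mid (n,r)} d\,\overline{\mu}_r(r/d)\,\tau(n/d)$, using that $h_r(d) = 0$ unless $d \mid r$ (and then $\eta_r(d) = d$) and that $d \mid n$ together with $d \mid r$ amounts to $d \mid (n,r)$. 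Associativity and commutativity of Dirichlet convolution together with $1 * 1 = \tau$ do all the work here.

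There is no real obstacle; the only thing to be careful about is the bookkeeping in the interchange of summations, specifically checking that the constraint "$e \mid d$ and $d \mid n$" collapses correctly to "$e \mid (n,r)$, with $d/e \mid n/e$", which is where the divisor-counting factor $\tau(n/e)$ comes from. I would present the direct double-sum argument as the main proof since it is self-contained, and perhaps mention the convolution viewpoint as a one-line alternative.
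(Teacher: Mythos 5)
Your proof is correct and is essentially the paper's argument: the paper's proof is exactly your second (convolution) version, writing $\sum_{d\mid n}\overline{c}_r(d)=(\overline{c}_r * 1)(n)=[\eta_r(\cdot)\,\overline{\mu}_r(r/(\cdot)) * 1 * 1](n)$ and using $1*1=\tau$, while your direct double-sum interchange is just this convolution identity unpacked. Both are fine; no gap.
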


\begin{proof}
Using Theorem \ref{th:conv-n} we obtain 
\begin{equation*}
\sum_{d\mid n}\overline{c}_r(d)  = (\overline{c}_r\ast 1)(n) 
=  [\eta_r(\cdot) \overline{\mu}_{r}\big(r/(\cdot)\big) * 1(\cdot) * 1(\cdot)] (n). 
\end{equation*}
Since $1\ast 1=\tau$, we obtain Corollary \ref{th:conv-n-sum}. 
\end{proof}

\begin{remark}
For Ramanujan's sum we have 
\begin{equation*}
\sum_{d\mid n} c_r(d)  = \sum_{d\mid (n, r)} d \mu(r/d) \tau(n/d).  
\end{equation*}
\end{remark}

We later need the following lemma involving the function $\overline{\mu}_{r}$. 

\begin{lemma}\label{le:mu-r}
For any arithmetical function $f$, 
\begin{equation*}
\sum_{d\mid r} f(d) \overline{\mu}_r(r/d)=[(f*\mu)\oplus 1](r). 
\end{equation*}
\end{lemma}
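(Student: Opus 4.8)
The plan is to verify the identity by reducing it to the defining relation \eqref{eq:mu-r} for $\overline{\mu}_r$ and the definition of the unitary convolution. First I would observe that $\overline{\mu}_r$ is, by construction, the Dirichlet inverse of $1$ relative to $g_r$; more precisely, since $\overline{\mu}_r * 1 = g_r$ by \eqref{eq:mu-r}, convolving on the right by $\mu$ gives $\overline{\mu}_r = g_r * \mu$. Hence the left-hand side $\sum_{d\mid r} f(d)\overline{\mu}_r(r/d)$ is exactly the value at $r$ of the Dirichlet convolution $f * \overline{\mu}_r = f * g_r * \mu = (f*\mu) * g_r$.

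Next I would unwind $(f*\mu)*g_r$ at the point $r$. Writing $h = f*\mu$ for brevity, we have $((h)*g_r)(r) = \sum_{d\mid r} h(d)\, g_r(r/d)$. By the definition of $g_r$, the term $g_r(r/d)$ is $1$ precisely when $r/d$ is a unitary divisor of $r$, i.e. when $(r/d,\, d)=1$, and is $0$ otherwise. Thus the sum collapses to $\sum_{d\,\|\, r} h(d)$, which is exactly $[h \oplus 1](r) = [(f*\mu)\oplus 1](r)$ by the definition of the unitary convolution with the constant function $1$. Assembling these two steps yields the claimed identity.

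The only point requiring a little care — and the step I would flag as the main obstacle — is justifying the manipulation $\overline{\mu}_r = g_r*\mu$ and the reassociation $f*g_r*\mu$, since all of this takes place with $r$ fixed and $\overline{\mu}_r$, $g_r$ regarded as functions of their (divisor) argument. This is legitimate because Dirichlet convolution is associative and commutative on the ring of arithmetical functions, and \eqref{eq:mu-r} is precisely the statement $\overline{\mu}_r * 1 = g_r$ in that ring; multiplying by $\mu$ (the Dirichlet inverse of $1$) is valid. Alternatively, if one prefers to avoid invoking Dirichlet inverses, one can argue directly: substitute the defining relation by writing $f(d) = \sum_{e\mid d}(f*\mu)(e)$ is not quite what is needed — rather, replace $\overline{\mu}_r(r/d)$ using $\overline{\mu}_r = g_r*\mu$ obtained from \eqref{eq:mu-r} and interchange the resulting double sum, then recognize the inner sum over multiples as producing $g_r$; in either formulation the computation is short and purely formal.
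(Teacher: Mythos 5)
Your proof is correct and follows essentially the same route as the paper: the paper likewise substitutes $\overline{\mu}_r=\mu * g_r$ (the M\"obius inversion of \eqref{eq:mu-r}), regroups the resulting sum into $\bigl((f*\mu)*g_r\bigr)(r)$, and collapses $g_r$ to a sum over unitary divisors, only writing out the double-sum manipulation explicitly rather than citing associativity in the ring of arithmetical functions. No gaps; the final hedging about an alternative direct substitution is unnecessary but harmless.
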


\begin{proof}
We have  
\begin{eqnarray*}
\sum_{d\mid r} f(d) \overline{\mu}_r(r/d)
&=& \sum_{de=r} f(d) \overline{\mu}_r(e)
=\sum_{de=r} f(d) \sum_{ab=e}\mu(a) g_r(b)\\
&=&\sum_{dab=r} f(d)\mu(a) g_r(b)
=\sum_{b\mid r} g_r(b) (f\ast\mu)(r/b)\\
&=&\sum_{b\| r} (f\ast\mu)(r/b)
=[(f\ast\mu)\oplus 1](r). 
\end{eqnarray*}
\end{proof}


\section{Even functions (mod $r$)}\label{sec:even}

Let $r\in\N$ be fixed. 
A function $f\colon\Z \to \C$ is said to be $r$-periodic or periodic (mod $r$)  if
$f(n)=f(n+r)$ for all $n\in \Z$. 
A function $f\colon\Z \to \C$ is said to be $r$-even or even (mod $r$) if
$f(n)=f((n, r))$ for all $n\in \Z$. 
Each $r$-even function is $r$-periodic.
Ramanujan's sum $c_r(n)$ is an example of an $r$-even function. 
It is easy to see on the basis of Theorem \ref{th:conv-n} that 
$\overline{c}_r(n)$ is also an $r$-even function. 
For material on $r$-periodic and $r$-even functions we refer to 
\cite{A,C,H2001,H2007,M,Sa,S2008,SS,Si,Sp,T}.

The discrete Fourier transform (DFT) of an $r$-periodic function $f$ is the function
$\widehat{f}\colon \Z \to \C$ defined by
\begin{equation} \label{DFT}
\widehat{f}(n)= \sum_{a=1}^r f(a) \exp(-2\pi ian/r).
\end{equation}
Each $r$-even function $f$ can be represented as
\begin{equation} \label{even_func}
f(n)= \frac1{r} \sum_{d\mid r} \widehat{f}(r/d) c_d(n),  
\end{equation}
see \cite{H2007}. 
The Cauchy convolution of $r$-periodic functions $f$ and $g$ is given by
\begin{equation} \label{Cauchy_periodic}
(f\otimes g)(n)= \sum_{k \ppmod r} f(n-k)g(k).
\end{equation}
It is well known that the Cauchy convolution $f\otimes g$ of two $r$-even functions $f$ and $g$ is also
$r$-even and
\begin{equation} \label{Cauchy_per}
\widehat{(f\otimes g)}(n)= \widehat{f}(n) \widehat{g}(n). 
\end{equation}
Thus 
\begin{equation} \label{Cauchy2}
(f\otimes g)(n)= \frac{1}{r} \sum_{d\mid r} \widehat{f}(r/d) \widehat{g}(r/d) c_d(n).
\end{equation}

\begin{example} \label{ex:DFT:delta} 
Let $\overline{\delta}_{r}$ denote the characteristic function of the regular integers modulo $r$, 
that is, $\overline{\delta}_{r}(n)=1$ if $n$ is regular modulo $r$ (i.e. if $(n, r)\Vert r$, see \cite{Tot2008}),
and 
$\overline{\delta}_{r}(n)=0$ otherwise. 
By the definition $(\ref{DFT})$ of DFT it is easy to see that 
\[
\widehat{\overline{\delta}}_r(n)  = \overline{c}_{r}(n). 
\]
Therefore 
\[
\widehat{\overline{c}}_r(n)  = r \overline{\delta}_{r}(n). 
\]
Let $\delta_r$ denote the Kronecker function, that is,
$\delta_r(n)=1$ if $(n,r)=1$, and $\delta_r(n)=0$ otherwise.
Then
\[
\widehat{\delta}_r(n)=c_r(n), \quad \widehat{c}_r(n)=r\delta_r(n). 
\]
These properties of Kronecker's function and Ramanujan's sum are well known 
\cite{H2001,S2008}. 
\end{example}

\begin{example}
Let $\overline{N}_{r, u}(n)$ denote the number of solutions of the congruence
\[
n 
\equiv x_1 + \cdots + x_u \pmod r
\]
such that each $x_i$ is regular {\rm (mod $r$)}. 
Then 
\[
\overline{N}_{r, u} = \overline{\delta}_{r}\otimes \cdots \otimes \overline{\delta}_{r}\quad  
(\overline{\delta}_{r}, u\ {\rm times})
\]
and
\[
\widehat{\overline{N}}_{r, u} = \Bigl(\widehat{\overline{\delta}}_{r}\Bigr)^u = \big(\overline c_r\big)^u. 
\]
We obtain 
\[ 
\overline{N}_{r,u}(n)= \frac1{r}\sum_{d\mid r} (\overline{c}_r(r/d))^u c_d(n).
\]
The function $\overline{N}_{r, 2}$ may be considered an analogue of Nagell's function $\theta_r$ under regular
integers and therefore could be denoted as $\overline{N}_{r, 2}=\overline{\theta}_r$. 
Thus
\[
\widehat{\overline{\theta}}_r= (\overline{c}_r)^2, 
\]
\[
\widehat{(\overline{c}_r)^2} = r\overline{\theta}_r. 
\]
The analogous results involving Ramanujan's sum are as follows \cite{H1997,H2001,M,MN,Sp}. 
Let $N_{r, u}(n)$ denote the number of
solutions of the congruence
\[
n \equiv x_1 + \cdots + x_u \pmod r
\]
with $(x_k,r) = 1$, $k=1,2,\ldots,u$. 
Then 
\[
N_{r, u} = \delta_r \otimes \cdots \otimes \delta_r\ (\delta_r, u\ {\rm times})
\]
and
\[
\widehat{N}_{r, u}
= \Bigl(\widehat\delta_r\Bigr)^u
= (c_r)^u. 
\]
Nagell's totient is given as $\theta_r = N_{r, 2}$.
Thus
\[
\widehat \theta_r= (c_r)^2, 
\]
\[
\widehat{(c_r)^2}= r\theta_r. 
\]

\end{example}

\begin{example} 
If $f$ and $g$ are even (mod $r$), then $(\ref{Cauchy2})$ can be written as  
\[
\sum_{a \ppmod r} f(n-a) g(a) 
= \frac{1}{r}\sum_{d | r}  \widehat{f}(r/d) \widehat{g}(r/d) \ c_d(n). 
\]
Let $g=\overline{\delta}_r$. 
Then $\widehat{g}=\overline c_r$ and therefore
\[
\sum_{
    \substack{a \ppmod r\\
              a\ {\rm regular}\ppmod r}}
     f(n-a)
=
 \frac{1}{r} \sum_{d | r} \widehat{f}(r/d) \overline c_r(r/d) c_d(n). 
\]
In particular, if $f(n)=c_r(n)$, then $\widehat{f}(n)=r\delta_r(n)$ and therefore 
\[
\sum_{
    \substack{a \ppmod r\\
              a\ {\rm regular}\ppmod r}}
     c_r(n-a)
=
 \overline c_r(1) c_r(n)
=
\overline{\mu}(r) c_r(n),  
\]
for $\overline{\mu}$, see Remark \ref{re:squareful}. 
The analogous results for the sums over reduced residue systems are \cite{H1997,M}
\[
\sum_{
    \substack{a \ppmod r\\
              (a, r)=1}}
     f(n-a)
=
 \frac{1}{r} \sum_{d | r} \widehat{f}(r/d)  c_r(r/d) c_d(n),  
\]
\[
\sum_{
    \substack{a \ppmod r\\
              (a, r)=1}}
     c_r(n-a)
= c_r(1) c_r(n)
= \mu(r) c_r(n). 
\]
\end{example}

\begin{theorem}\label{th:conv-r}
\[
\overline{c}_r(n) = \sum_{d \| r} c_d(n).
\]
\end{theorem}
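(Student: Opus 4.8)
The plan is to derive this from Theorem \ref{th:conv-n} together with Lemma \ref{le:mu-r}. The key preliminary observation is that the ordinary Ramanujan sum is itself a Dirichlet convolution in the modulus: for every $d\in\N$,
\[
c_d(n)=\sum_{e\mid (n,d)} e\,\mu(d/e)=\sum_{e\mid d}\eta_n(e)\,\mu(d/e)=(\eta_n*\mu)(d),
\]
where $\eta_n$ is the arithmetical function introduced in Remark \ref{re:conv-n} (so $\eta_n(e)=e$ when $e\mid n$ and $\eta_n(e)=0$ otherwise). In other words, for fixed $n$ the arithmetical function $d\mapsto c_d(n)$ is exactly $\eta_n*\mu$.

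Next I would rewrite Theorem \ref{th:conv-n} as a genuine divisor sum over $r$: since the summands with $d\nmid n$ vanish,
\[
\overline{c}_r(n)=\sum_{d\mid (n,r)} d\,\overline{\mu}_r(r/d)=\sum_{d\mid r}\eta_n(d)\,\overline{\mu}_r(r/d).
\]
Now apply Lemma \ref{le:mu-r} with $f=\eta_n$. It yields
\[
\sum_{d\mid r}\eta_n(d)\,\overline{\mu}_r(r/d)=\bigl[(\eta_n*\mu)\oplus 1\bigr](r)=\sum_{d\| r}(\eta_n*\mu)(d)=\sum_{d\| r}c_d(n),
\]
the last step being the preliminary identity above. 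Combining the two displays gives $\overline{c}_r(n)=\sum_{d\| r}c_d(n)$, which is the assertion.

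I do not expect any real obstacle; the only thing needing care is the bookkeeping in $c_d(n)=(\eta_n*\mu)(d)$, which is just the classical formula $c_r(n)=\sum_{d\mid(n,r)}d\,\mu(r/d)$ read with the modulus called $d$. For completeness I note an alternative, self-contained route that bypasses the lemma: partition the regular residues $a$ modulo $r$ according to the value $d=(a,r)$, which ranges precisely over the unitary divisors of $r$ because $a$ is regular $\pmod r$ exactly when $(a,r)\| r$; writing $a=db$ with $b$ running over a reduced residue system modulo $r/d$, the resulting inner sum $\sum_{(b,\,r/d)=1}\exp(2\pi i bn/(r/d))$ is $c_{r/d}(n)$; then reindex by the involution $d\mapsto r/d$ on the unitary divisors of $r$. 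Either way the computation is routine.
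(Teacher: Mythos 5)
Your proposal is correct, and your main route is genuinely different from the paper's. You deduce the identity algebraically from Theorem \ref{th:conv-n}: writing $\overline{c}_r(n)=\sum_{d\mid r}\eta_n(d)\,\overline{\mu}_r(r/d)$ and applying Lemma \ref{le:mu-r} with $f=\eta_n$, together with the observation that $d\mapsto c_d(n)$ is exactly $\eta_n*\mu$, gives $[(\eta_n*\mu)\oplus 1](r)=\sum_{d\|r}c_d(n)$; every step checks out, and the lemma indeed applies since $\eta_n$ is an arithmetical function for each fixed $n$. The paper instead proves the theorem via the Fourier machinery of Section \ref{sec:even}: it takes $f=\overline{c}_r$ in the expansion $(\ref{even_func})$ of an $r$-even function and uses $\widehat{\overline{c}}_r=r\,\overline{\delta}_r$ from Example \ref{ex:DFT:delta}, so that $\overline{c}_r(n)=\sum_{d\mid r}\overline{\delta}_r(r/d)c_d(n)=\sum_{r/d\|r}c_d(n)$. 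Your approach buys a purely convolutional derivation showing that Theorem \ref{th:conv-r} is a formal consequence of Theorem \ref{th:conv-n} and Lemma \ref{le:mu-r} (which the paper only exploits later, for the Dirichlet-series results), with no appeal to the DFT; the paper's approach makes the structural link with even functions and the transform pair $(\overline{\delta}_r,\overline{c}_r)$ explicit, which is the theme it develops afterwards. Your ``alternative, self-contained route'' --- grouping the regular residues $a$ by $d=(a,r)\|r$ and reindexing by $d\mapsto r/d$ --- is precisely the second argument the paper sketches in one sentence at the end of its proof, and it too is sound.
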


\begin{proof}
Taking $f=\overline{c}_r$ in $(\ref{even_func})$ and using Example \ref{ex:DFT:delta} 
we obtain 
$$
\overline{c}_r(n)= \frac1{r} \sum_{d\mid r} r \overline{\delta}_r(r/d) c_d(n)
= \sum_{r/d\| r}c_d(n),   
$$
which proves Theorem \ref{th:conv-r}. 
This follows also from the definition of $\overline{c}_r(n)$ by grouping the terms according to the values
$(a, r)=d\Vert r$.
\end{proof}

\begin{remark}\label{re:squareful}
For $n=0$ we have $\overline{c}_r(0)=\varrho(r)$.  
For $n=1$ we
obtain an analogue of the M\"obius function as 
\[
\overline{c}_r(1) = \sum_{d \| r} \mu(d) \equiv \overline{\mu}(r), 
\]
which is the characteristic function of the squareful positive integers. 
For the usual Ramanujan's sum we have $c_r(0)=\phi(r)$ and $c_r(1)=\mu(r)$. 
\end{remark}

\begin{remark}
Theorem \ref{th:conv-r} gives a convolutional expression of 
$\overline{c}_r(n)$ with respect to the variable $r$. 
In fact, Theorem \ref{th:conv-r} can be written as 
$$
\overline{c}_{({\bf\cdot})}(n) = c_{({\bf\cdot})}(n)\oplus 1. 
$$
For $n=0$ we obtain the known relation 
$$
\varrho = \phi\oplus 1. 
$$
\end{remark}

As a consequence of Theorem \ref{th:conv-r} we obtain another 
convolutional expression of 
$\overline{c}_r(n)$ with respect to the variable $r$. 

\begin{theorem}  \label{th:conv-r2}
\begin{equation*}
\overline{c}_r(n)= \sum_{de=r} \mu(d)\sigma^*_{(d)}((n,e)_*),
\end{equation*}
where $\sigma^*_{(k)}(m)$ is the sum of those unitary divisors $a$ of
$m$ for which the complementary divisor $m/a$ is prime to $k$ and where 
$(n, m)_*=\max \{d: d\mid n, d\Vert m \}$.
\end{theorem}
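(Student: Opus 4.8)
The plan is to reduce the identity to the case of a prime power and there to read it off from Theorem~\ref{th:conv-r}.

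\emph{Multiplicativity.} First I would check that, for fixed $n$, both sides are multiplicative functions of $r$. The left side is multiplicative by the remark after Theorem~\ref{th:conv-r}: $\overline{c}_{(\cdot)}(n)=c_{(\cdot)}(n)\oplus 1$, a unitary convolution of two functions multiplicative in the subscript. For the right side, if $r=r_1 r_2$ with $(r_1,r_2)=1$ then every factorization $de=r$ splits uniquely as $d=d_1 d_2$, $e=e_1 e_2$ with $d_i e_i=r_i$; one has $\mu(d)=\mu(d_1)\mu(d_2)$, $(n,e_1 e_2)_*=(n,e_1)_*\,(n,e_2)_*$ (coprime unitary divisors of $n$ multiply), and $\sigma^*_{(d_1 d_2)}(m_1 m_2)=\sigma^*_{(d_1)}(m_1)\,\sigma^*_{(d_2)}(m_2)$ for coprime $m_i$ — here I would use that $d_i$ divides $r_i$ and $m_i=(n,e_i)_*$ divides $r_i$, so the cross-prime gcd conditions in the definition of $\sigma^*$ are automatically satisfied. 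Hence the right side factors as $\prod_{p^a\|r}$ of its value at $p^a$.

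\emph{Prime powers.} It then suffices to prove the identity for $r=p^a$. Since $\mu(p^j)=0$ for $j\ge 2$, the right side reduces to the two terms
\[
\sigma^*_{(1)}\big((n,p^{a})_*\big)-\sigma^*_{(p)}\big((n,p^{a-1})_*\big).
\]
The only unitary divisors of $p^k$ are $1$ and $p^k$, so $(n,p^k)_*$ equals $p^k$ if $p^k\mid n$ and $1$ otherwise; evaluating $\sigma^*_{(1)}$ and $\sigma^*_{(p)}$ on $1$ and on $p^k$ turns the right side into an explicit step function of $v_p(n)$. On the other side, Theorem~\ref{th:conv-r} gives $\overline{c}_{p^{a}}(n)=c_1(n)+c_{p^{a}}(n)=1+c_{p^{a}}(n)$, and $c_{p^{a}}(n)=p^{a}[\,p^{a}\mid n\,]-p^{a-1}[\,p^{a-1}\mid n\,]$. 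Comparing the two expressions is then a short case check according to whether $v_p(n)\ge a$, $v_p(n)=a-1$, or $v_p(n)\le a-2$.

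Alternatively one can proceed directly from Theorem~\ref{th:conv-r} without passing to prime powers: substitute $c_t(n)=\sum_{\delta\mid(n,t)}\delta\,\mu(t/\delta)$ into $\overline{c}_r(n)=\sum_{t\|r}c_t(n)$, and then reorganise the resulting triple sum over $(t,\delta)$ by collecting terms with a fixed squarefree ``Möbius part'' $d=t/\delta$ and putting $e=r/d$; the inner sum over the admissible $\delta\mid n$ should then match the definition of $\sigma^*_{(d)}\big((n,e)_*\big)$. I expect the bookkeeping in this reorganisation — keeping the unitary constraint $t\|r$ consistent with the factorisation $t=\delta d$, in which $\delta$ and $d$ may share primes — to be the main obstacle; the multiplicativity route above sidesteps it at the cost of the explicit prime-power verification.
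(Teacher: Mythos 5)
Your overall strategy (multiplicativity in $r$ plus a prime-power check) is legitimate and genuinely different from the paper, whose proof is precisely the ``alternative'' you sketch at the end: substitute $c_d(n)=\sum_{e\mid(n,d)}e\mu(d/e)$ into Theorem~\ref{th:conv-r} and rearrange the resulting sum. The multiplicativity half of your argument is fine. The genuine problem is the step you defer as ``a short case check'': it does not close. For $r=p^a$ with $a\ge2$ and $\nu_p(n)\le a-2$, your reduction gives for the right-hand side $\sigma^*_{(1)}\big((n,p^a)_*\big)-\sigma^*_{(p)}\big((n,p^{a-1})_*\big)=\sigma^*_{(1)}(1)-\sigma^*_{(p)}(1)=1-1=0$, while the left-hand side is $1+c_{p^a}(n)=1$. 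Concretely, at $r=4$, $n=1$ one has $\overline{c}_4(1)=\overline{\mu}(4)=1$, but $\sum_{de=4}\mu(d)\sigma^*_{(d)}((1,e)_*)=\sigma^*_{(1)}(1)-\sigma^*_{(2)}(1)=0$. So with $\sigma^*_{(k)}(m)$ read literally as defined in the statement (coprimality imposed on the complementary divisor $m/a$ of $m=(n,e)_*$), the identity itself fails for non-squarefree $r$, and no amount of care in the case check can repair it; your method is not at fault, but your plan as written ends in a contradiction rather than a proof.

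What the rearrangement actually produces --- and what you should prove --- is the variant in which the coprimality condition is taken with respect to the complement inside $e$ rather than inside $(n,e)_*$, namely $\overline{c}_r(n)=\sum_{de=r}\mu(d)\sum_{a\| (n,e)_*,\ (e/a,\,d)=1}a$; equivalently, the stated formula needs the extra requirement $\big(e/(n,e)_*,\,d\big)=1$ on the terms. This is visible in the paper's own computation, which carries the condition $(k/e,y)=1$ through to the last display and only then rewrites the inner sum as $\sigma^*_{(y)}((n,k)_*)$; the two readings agree exactly when $(n,k)_*=k$ (e.g.\ in the corollary with $n=r$), but not in general --- and this is the same bookkeeping subtlety about $d$ and $e$ sharing primes that you flag as the obstacle in the direct route. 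With the corrected inner condition your prime-power verification does close: in the troublesome case the term $d=p$, $e=p^{a-1}$ is killed because $(e/a,p)=(p^{a-1},p)\ne1$, giving $1-0=1$ on both sides, and the other two cases come out as you describe. So either prove the corrected identity by your multiplicative route or insert the missing condition; as it stands, the final step of your proposal would fail.
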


\begin{proof} From Theorem \ref{th:conv-r} we obtain 
\begin{equation*}
\overline{c}_r(n) = \sum_{d\Vert r} c_d(n) = \sum_{d\mid \mid r}
\sum_{e\mid (n,d)} e\mu(d/e).
\end{equation*}
Let $dj=r, (d,j)=1$, $ey=d$ and obtain
\begin{equation*}
\overline{c}_r(n) = \sum_{\substack{eyj=r\\(ey,j)=1\\e\mid n}} e\mu(y)
= \sum_{yk=r} \mu(y) \sum_{\substack{ej=k\\ (e,j)=1 \\ (y,j)=1\\e\mid n}} e
\end{equation*}
\begin{equation*}
=\sum_{yk=r} \mu(y) \sum_{\substack{e\mid \mid (n,k)_* \\
(k/e,y)=1}} e = \sum_{yk=r} \mu(y) \sigma^*_{(y)}((n,k)_*).
\end{equation*}
\end{proof}

Taking $n=r$ in Theorem \ref{th:conv-r2} we obtain a new convolutional expression for $\varrho(r)$. 

\begin{corollary}  
\begin{equation*}
\varrho(r)= \sum_{de=r} \mu(d) \sigma^*_{(d)}(e).
\end{equation*}
\end{corollary}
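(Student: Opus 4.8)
The plan is to specialize Theorem \ref{th:conv-r2} at $n=r$ and identify the left-hand side. First I would observe that $\overline{c}_r$ is $r$-periodic (indeed $r$-even, as noted in Section \ref{sec:even}), so $\overline{c}_r(r)=\overline{c}_r(0)$, and by Remark \ref{re:squareful} we have $\overline{c}_r(0)=\varrho(r)$. Hence $\overline{c}_r(r)=\varrho(r)$, which will be the quantity produced on the left when we put $n=r$.

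Next I would simplify the inner argument $(n,e)_*$ appearing in Theorem \ref{th:conv-r2} under the substitution $n=r$. Recall $(n,m)_*=\max\{d:\ d\mid n,\ d\Vert m\}$. In the sum of Theorem \ref{th:conv-r2} the index $e$ runs over divisors of $r$ (since $de=r$), so any unitary divisor $d$ of $e$ automatically satisfies $d\mid e\mid r$, i.e. $d\mid n$ when $n=r$. Therefore $(r,e)_*=\max\{d:\ d\Vert e\}=e$, since $e$ is trivially a unitary divisor of itself. Substituting this back into Theorem \ref{th:conv-r2} gives
\begin{equation*}
\varrho(r)=\overline{c}_r(r)=\sum_{de=r}\mu(d)\,\sigma^*_{(d)}\big((r,e)_*\big)=\sum_{de=r}\mu(d)\,\sigma^*_{(d)}(e),
\end{equation*}
which is the claimed formula.

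There is essentially no obstacle here: the only point requiring a moment's care is the elementary verification that $(r,e)_*=e$ whenever $e\mid r$, and this is immediate from the definition of $(\cdot\,,\cdot)_*$ once one notes that divisibility by $e$ forces divisibility by $r$. Everything else is a direct substitution into the already-proved Theorem \ref{th:conv-r2}, combined with the evaluation $\overline{c}_r(0)=\varrho(r)$ from Remark \ref{re:squareful}.
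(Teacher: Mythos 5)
Your proof is correct and follows essentially the same route as the paper, which simply takes $n=r$ in Theorem \ref{th:conv-r2}; you have merely spelled out the two implicit details, namely that $\overline{c}_r(r)=\overline{c}_r(0)=\varrho(r)$ and that $(r,e)_*=e$ whenever $e\mid r$. Both verifications are correct, so nothing further is needed.
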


\begin{remark}
Applying Theorem \ref{th:conv-r} and known results on Ramanujan's sum we can derive 
results for $\overline{c}_r(n)$. As examples we present the following. 
It is well known \cite{M} that for $s, t\mid r$, 
\begin{equation}\label{eq:ccn}
\sum_{a+b\equiv n\ ({\rm mod}\ {r})} c_s(a) c_t(b)
= \begin{cases}  
  rc_s(n) & \text{if}\ s=t,\\
  0 \ & \text{otherwise}.
\end{cases}
\end{equation}
Taking $n=0$ we obtain for $s, t\mid r$, 
\begin{equation}\label{eq:cc0}
\sum_{a\ ({\rm mod}\ {r})} c_s(a) c_t(a)
= \begin{cases}  
  r\phi(s) & \text{if}\ s=t,\\
  0 \ & \text{otherwise}.
\end{cases}
\end{equation}
Analogous results for $\overline{c}_r(n)$ are 
\begin{equation}\label{eq:-c-cn}
\sum_{a+b\equiv n\ ({\rm mod}\ {r})} \overline{c}_s(a) \overline{c}_t(b)
= r [c_{({\bf\cdot})}(n)\oplus 1]((s, t)_{**}),
\end{equation}
\begin{equation}\label{eq:-c-c0}
\sum_{a\ ({\rm mod}\ {r})} \overline{c}_s(a) \overline{c}_t(a)
= r \varrho((s, t)_{**}),
\end{equation}
\begin{equation}\label{eq:-ccn}
\sum_{a+b\equiv n\ ({\rm mod}\ {r})} \overline{c}_s(a) c_t(b)
= \begin{cases}  
  rc_t(n) & \text{if}\ t\Vert s,\\
  0 \ & \text{otherwise}, 
\end{cases}
\end{equation}
\begin{equation}\label{eq:-cc0}
\sum_{a\ ({\rm mod}\ {r})} \overline{c}_s(a) c_t(a)
= \begin{cases}  
  r\phi(t) & \text{if}\ t\Vert s,\\
  0 \ & \text{otherwise}, 
\end{cases}
\end{equation}
where $s, t\mid r$ and $(s, t)_{**}$ stands for the greatest common unitary divisor of $s$ and
$t$. 
In order to prove $(\ref{eq:-c-cn})$ we apply Theorem \ref{th:conv-r} to obtain 
\[
\sum_{a+b\equiv n\ ({\rm mod}\ {r})} \overline{c}_s(a) \overline{c}_t(b)
= \sum_{a+b\equiv n\ ({\rm mod}\ {r})} 
   \sum_{d \| s} c_d(a) \sum_{e \| t} c_e(b)
= \sum_{d \| s} \sum_{e \| t} 
    \sum_{a+b\equiv n\ ({\rm mod}\ {r})} c_d(a) c_e(b). 
\]
Now, applying $(\ref{eq:ccn})$ we get $(\ref{eq:-c-cn})$. Taking $n=0$ in $(\ref{eq:-c-cn})$ gives
$(\ref{eq:-c-c0})$.  
Equations $(\ref{eq:-ccn})$ and $(\ref{eq:-cc0})$ follows in a similar way. 
Equations $(\ref{eq:-c-cn})$ and $(\ref{eq:-ccn})$ could also be proved applying an analogue of
$(\ref{Cauchy2})$,  see \cite[Eq. (4.1)]{H1997}.  
\end{remark}


\section{Applications of the Parseval formula}

We present an analogue of the Parseval formula \cite{MV2007} in harmonic analysis
and its consequences.
In fact, 
if  $f$ is $r$-periodic, then 
\begin{equation}
\sum_{n=1}^r |f(n)|^2 
= \frac{1}{r}\sum_{k=1}^r |\widehat{f}(k)|^2.
\end{equation}

\begin{example}
Let $f=\overline c_r$. Then 
\begin{equation*}
\sum_{n=1}^r \big(\overline c_r(n)\big)^2
=\frac{1}{r} \sum_{n=1}^r r^2(\overline{\delta}_{r}(n))^2 
=r \sum_{n=1\atop n\ {\rm regular}}^r 1
=r\, \varrho(r). 
\end{equation*} 
For $f=\overline{N}_{r, u}$ we have 
\[
\sum_{n=1}^r \big( \overline{N}_{r, u}(n) \big)^2 
= \frac1 r \sum_{n=1}^r \big(\overline c_r(n)\big)^{2u}. 
\]
For $u=2$ i.e. for $f=\overline{\theta}_r$ we have 
\[
\sum_{n=1}^r \big(\overline{\theta}_r(n)\big)^2 
= \frac1 r \sum_{n=1}^r \big(\overline{c}_r(n)\big)^4. 
\]
Analogous results for Ramanujan's sum are 
\[
\sum_{n=1}^r (c_r(n))^2 
= \frac1{r} \sum_{n=1}^r r^2 \big(\delta_r(n)\big)^2 
= r \sum_{\substack{n=1\\
(n, r)=1}}^r 1 =r \phi(r),
\]
\[
\sum_{n=1}^r \big( N_{r, u}(n) \big)^2 
= \frac1 r \sum_{n=1}^r \big(c_r(n)\big)^{2u},  
\]
\[
\sum_{n=1}^r \big(\theta_r(n)\big)^2 
= \frac1 r \sum_{n=1}^r \big(c_r(n)\big)^4. 
\]

\end{example}


\section{Multiplicative functions}\label{Selberg}

An arithmetical function $f\colon\N^u\to\C$ of $u$ 
variables is said to be Selberg multiplicative
in $n_1, n_2,\ldots, n_u$ if
for each prime $p$ there exists $F_p\colon\N_0^u\to\C$ 
with $F_p(0, 0,\ldots, 0)=1$ for
all but finitely many $p$ such that
\begin{equation}\label{e:Sel-exp}
f(n_1, n_2,\ldots, n_u)=\prod_{p} F_p(\nu_p(n_1), 
\nu_p(n_2),\ldots, \nu_p(n_u))
\end{equation}
for all $n_1, n_2,\ldots, n_u\in\N$.
Here $\nu_p(n)$ is the exponent of $p$ in the canonical factorization of $n$. 
Note that if $f$ is Selberg multiplicative
in $n_1, n_2,\ldots, n_u$, then $f$ is Selberg multiplicative
in any nonempty subset of the set of variables $n_1, n_2,\ldots, n_u$. 

An arithmetical function $f\colon\N^u\to\C$ of $u$ 
variables is said to be multiplicative
in $n_1, n_2,\ldots, n_u$ if $f(1, 1,\ldots, 1)=1$ and 
\begin{equation}
f(n_1, n_2,\ldots, n_u)f(m_1, m_2,\ldots, m_u)
=f(n_1m_1, n_2m_2,\ldots, n_um_u)
\end{equation}
for all $n_1, n_2,\ldots, n_u\in\N$ and $m_1, m_2,\ldots, m_u\in\N$ 
with $(n_1 n_2\cdots n_u, m_1m_2\cdots m_u)=1$. 
Multiplicative functions $f$ are Selberg multiplicative with 
\begin{equation}
F_p(\nu_p(n_1), \nu_p(n_2),\ldots, \nu_p(n_u))=f(p^{\nu_p(n_1)}, 
p^{\nu_p(n_2)},\ldots, p^{\nu_p(n_u)}). 
\end{equation}

An arithmetical function $f\colon\N\to\C$ (of one variable) is said to be
semimultiplicative if
\begin{equation}
f(m)f(n)=f((m, n))f([m, n])
\end{equation}
for all $m, n\in\N$, where $(m, n)$ and $[m, n]$ stand for the gcd
and lcm of $m$ and $n$. It is known that  an arithmetical function
$f$ (not identically zero) is semimultiplicative if and only if
there exists a nonzero constant $c_f$, a positive integer $a_f$ and
a multiplicative function $f_m$ such that
\begin{equation}\label{e:sem-rea}
f(n)=c_f f_m(n/a_f)
\end{equation}
for all $n\in\N$. (We interpret that the arithmetical function $f_m$
possesses the property that $f_m(x)=0$ if $x$ is not a positive
integer.) Note that $c_f=f(a_f)$. We will take $a_f$ as the smallest
positive integer $k$ such that $f(k)\ne 0$. Further, it is known
that an arithmetical function (of one variable) is Selberg multiplicative if and only
if it is semimultiplicative. 
A semimultiplicative function $f\colon\N\to\C$ not identically zero possesses a
Selberg expansion as
\begin{equation}
f(n)=f(a_f)\prod_{p}
\left(\frac{f(a_fp^{\nu_p(n)-\nu_p(a_f)})}{f(a_f)}\right).
\end{equation}
Semimultiplicative functions $f$ with
$f(1)\ne 0$ are known as quasimultiplicative functions.
Quasimultiplicative functions $f$ possess the property
$f(1)f(mn)=f(m)f(n)$ whenever $(m, n)=1$. Semimultiplicative
functions $f$ with $f(1)=1$ are the usual multiplicative functions.

For material on Selberg multiplicative and
semimultiplicative functions we refer to
\cite{BHS,Hau,HHS,Rearick66a,Rearick66b,Selberg,Si}. 

We here present some multiplicative properties of 
the analogue of Ramanujan's sum with respect to regular integers (mod $r$). 
Similar results for the usual Ramanujan's sum are given at the end of this section. 
We begin with a general theorem on multiplicative $r$-even functions. 

\begin{theorem}\label{th:gen-mult}
Let $f(n, r)$ be an arithmetical function of two variables. 
If for each $r\ge 1$, $f(n, r)$ is $r$-even as a function of $n$ and 
if for each $n\ge 1$, $f(n, r)$ is multiplicative in $r$, 
then $f(n, r)$ is multiplicative as a function of two variables $r$ and $n$ $(\in\N)$. 
\end{theorem}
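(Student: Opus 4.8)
The plan is to verify directly the two clauses in the definition of multiplicativity for a function of two variables. First I would dispose of the normalization: taking $n=1$ in the hypothesis that $r\mapsto f(n,r)$ is multiplicative gives $f(1,1)=1$ at once.

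For the multiplicativity identity, fix $n,n',r,r'\in\N$ with $(nr,n'r')=1$; the goal is $f(nn',rr')=f(n,r)f(n',r')$. The coprimality condition unpacks into the four relations $(n,n')=(n,r')=(r,n')=(r,r')=1$. The elementary point I would isolate is that the gcd then factorizes: since $(r,r')=1$ we have $(nn',rr')=(nn',r)(nn',r')$, and since $(n',r)=1$ and $(n,r')=1$ this equals $(n,r)(n',r')$. Writing $d=(n,r)$ and $d'=(n',r')$, one has $d\mid r$, $d'\mid r'$, and hence $(d,d')\mid(r,r')=1$; moreover $(d',r)\mid(r',r)=1$, so $(dd',r)=(d,r)(d',r)=d$ and similarly $(dd',r')=d'$.

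Now I would chain the two hypotheses together. By $r$-evenness of $f(\cdot\,,rr')$ together with the gcd factorization, $f(nn',rr')=f\big((nn',rr'),rr'\big)=f(dd',rr')$. By multiplicativity of $f(dd',\cdot)$ and $(r,r')=1$, this equals $f(dd',r)\,f(dd',r')$. Applying $r$-evenness once more to each factor and using $(dd',r)=d$, $(dd',r')=d'$, we get $f(dd',r)=f(d,r)$ and $f(dd',r')=f(d',r')$. Finally, $r$-evenness in the original variables gives $f(n,r)=f(d,r)$ and $f(n',r')=f(d',r')$, so $f(nn',rr')=f(d,r)f(d',r')=f(n,r)f(n',r')$, as required.

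I do not expect any genuine obstacle: the argument is a short composition of the two given properties, and the only care needed is bookkeeping of the available coprimality relations and the repeated use of $(ab,c)=(a,c)(b,c)$ when $(a,b)=1$. Once $\overline{c}_r(n)$ is known to be $r$-even in $n$ (as observed after Theorem \ref{th:conv-n}) and multiplicative in $r$, this theorem will then yield its multiplicativity as a function of the two variables $n$ and $r$.
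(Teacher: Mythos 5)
Your proof is correct and takes essentially the same route as the paper's: a direct verification of the two-variable multiplicativity identity by chaining $r$-evenness in the first variable with multiplicativity in the second, plus routine gcd bookkeeping. The paper merely performs the steps in the opposite order --- it first splits $f(mn,rs)=f(mn,r)f(mn,s)$ by multiplicativity and then reduces $mn$ modulo $r$ and $s$ by evenness, using only $(mn,r)=(m,r)$ and $(mn,s)=(n,s)$, which lets it avoid your factorization $(nn',rr')=(n,r)(n',r')$ --- but this is a cosmetic difference, not a different argument.
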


\begin{proof} 
Let $(mr, ns)=1$. Then 
\begin{eqnarray*}
f(mn, rs) 
&=&  f(mn, r) f(mn, s) = f((mn, r), r)f((mn, s), s)\\
&=&  f((m, r), r) f((n, s), s)= f(m, r)f(n, s). 
\end{eqnarray*}
The proof is completed. 
\end{proof}

\begin{theorem}\label{th:mult}
{\rm  (1)} For each $n\in\Z$, $\overline{c}_r(n)$ is multiplicative in $r$.

{\rm  (2)} For each $r\in\N$, $\overline{c}_r(n)$ is semimultiplicative in $n$ $(\in\N)$. 

{\rm  (3)} $\overline{c}_r(n)$ is multiplicative
as a function of two variables $r$ and $n$ $(\in\N)$. 
\end{theorem}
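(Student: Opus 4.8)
The three parts are naturally handled in the order $(1), (2), (3)$, with $(1)$ feeding $(3)$. For part $(1)$ the plan is to read off the conclusion from Theorem~\ref{th:conv-r}, which says $\overline{c}_{(\cdot)}(n) = c_{(\cdot)}(n)\oplus 1$. For each fixed $n\in\Z$ the map $d\mapsto c_d(n)$ is multiplicative (the well-known multiplicativity of Ramanujan's sum in its index), the constant function $1$ is multiplicative, and the unitary convolution of two multiplicative functions is again multiplicative; hence $r\mapsto\overline{c}_r(n)$ is multiplicative, the normalization $\overline{c}_1(n)=1$ being automatic.

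For part $(2)$, fix $r$ and write $r=\prod_p p^{a_p}$. The idea is to exhibit an Euler-type factorization of $\overline{c}_r(n)$ in the variable $n$ and then invoke the equivalence ``Selberg multiplicative $\Longleftrightarrow$ semimultiplicative'' recorded in Section~\ref{Selberg}. Starting from Theorem~\ref{th:conv-n}, $\overline{c}_r(n)=\sum_{d\mid(n,r)} d\,\overline{\mu}_r(r/d)$; since every such $d$ divides $r$, its complementary divisor $r/d$ has $p$-adic valuation $a_p-\nu_p(d)$, and using that $\overline{\mu}_r$ is multiplicative one separates the sum over $d\mid(n,r)$ into a product over primes:
\[
\overline{c}_r(n) \;=\; \prod_p\Big(\sum_{e=0}^{\min(\nu_p(n),\,a_p)} p^{e}\,\overline{\mu}_r(p^{a_p-e})\Big) \;=:\; \prod_p F_p(\nu_p(n)).
\]
For $p\nmid r$ the inner sum collapses to the single term $\overline{\mu}_r(1)=1$, so $F_p\equiv 1$ for all but finitely many $p$, and in particular $F_p(0)=1$ off a finite set of primes. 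Thus $\overline{c}_r(\cdot)$ is Selberg multiplicative in $n$, hence semimultiplicative; it is not identically zero since $\overline{c}_r(r)=\varrho(r)\ge 1$ (see Remark~\ref{re:squareful} together with $r$-evenness), so the structure theorem of Section~\ref{Selberg} indeed applies.

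For part $(3)$ I would simply combine $(1)$ with Theorem~\ref{th:gen-mult}: $\overline{c}_r(n)$ is $r$-even as a function of $n$ for each $r$ (Section~\ref{sec:even}) and, by $(1)$, multiplicative in $r$ for each $n\in\N$, so Theorem~\ref{th:gen-mult} gives multiplicativity in the two variables $r,n\in\N$ at once. Alternatively, one can note that the factorization of part $(2)$ depends on $r$ only through the exponents $a_p=\nu_p(r)$ — because each value $\overline{\mu}_r(p^{j})$ depends only on $\nu_p(r)$ and $j$ — so $\overline{c}_r(n)=\prod_p H_p(\nu_p(r),\nu_p(n))$ with $H_p(0,0)=1$, from which $(1)$, $(2)$ and $(3)$ all follow directly.

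The step requiring the most care is the separation of variables in part $(2)$: one must verify that on the range $d\mid(n,r)$ the term $d\,\overline{\mu}_r(r/d)$ really factors as $\prod_p p^{\nu_p(d)}\,\overline{\mu}_r\!\big(p^{a_p-\nu_p(d)}\big)$, which hinges on $d\mid r$ (so that $r/d$ is an integer whose factorization is complementary to that of $d$ inside $r$) together with the stated multiplicativity of $\overline{\mu}_r$; the remainder is bookkeeping, including the harmless observation that $F_p(0)=\overline{\mu}_r(p^{a_p})$ vanishes exactly for the finitely many primes with $p\|r$, consistently with $\overline{c}_r(1)=\overline{\mu}(r)$.
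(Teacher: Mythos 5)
Your proposal is correct; parts (1) and (3) are exactly the paper's argument (multiplicativity of the unitary convolution $c_{(\cdot)}(n)\oplus 1$ from Theorem \ref{th:conv-r}, and Theorem \ref{th:gen-mult} combined with $r$-evenness and part (1)), but your part (2) takes a genuinely different route. The paper works from Remark \ref{re:conv-n}, writing $\overline{c}_r(\cdot)$ as the Dirichlet convolution $[\eta_r(\cdot)\,\overline{\mu}_r(r/(\cdot))]\ast 1$, proves that the single factor $\overline{\mu}_r(r/n)$ is Selberg multiplicative in $n$ via the expansion $\overline{\mu}_r(r/n)=\prod_p\overline{\mu}_r\bigl(p^{\nu_p(r)-\nu_p(n)}\bigr)$, and then invokes Rearick's closure theorems (pointwise products and Dirichlet convolutions of semimultiplicative functions are semimultiplicative). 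You instead expand the divisor sum of Theorem \ref{th:conv-n} directly into the Euler product $\overline{c}_r(n)=\prod_p F_p(\nu_p(n))$ with $F_p(k)=\sum_{e=0}^{\min(k,\nu_p(r))}p^e\,\overline{\mu}_r\bigl(p^{\nu_p(r)-e}\bigr)$, using multiplicativity of $\overline{\mu}_r$ and the fact that divisors of $(n,r)$ split prime by prime, and then apply the Selberg--semimultiplicative equivalence once, to $\overline{c}_r(\cdot)$ itself. Your separation-of-variables step is sound (each term $d\,\overline{\mu}_r(r/d)$ factors as $\prod_p p^{\nu_p(d)}\overline{\mu}_r\bigl(p^{\nu_p(r)-\nu_p(d)}\bigr)$ because $d\mid r$, and distributivity gives the product formula, with $F_p\equiv 1$ for $p\nmid r$). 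What each approach buys: yours is more self-contained and explicit --- it avoids citing Rearick's convolution theorems, exhibits the local factors $F_p$ (which also recover, e.g., the smallest $n$ with $\overline{c}_r(n)\neq 0$ of Remark \ref{re:an-ram-mult} and your two-variable factorization $\prod_p H_p(\nu_p(r),\nu_p(n))$, from which (1) and (3) can indeed be re-derived since $H_p(0,k)=H_p(j,0)\cdot 0^0$-free values satisfy $H_p(0,0)=1$ and $H_p(0,k)=1$) --- at the cost of a distributivity computation; the paper's route is shorter given the semimultiplicativity machinery already assembled in Section \ref{Selberg}, needing only the easy Selberg expansion of $\overline{\mu}_r(r/n)$. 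Your ancillary observations (non-vanishing via $\overline{c}_r(r)=\varrho(r)$, and $F_p(0)=\overline{\mu}_r(p^{\nu_p(r)})=0$ precisely when $p\,\|\,r$, matching $\overline{c}_r(1)=\overline{\mu}(r)$) are accurate and harmless.
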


\begin{proof}
(1) According to Theorem \ref{th:conv-r} we know that $\overline{c}_r(n)$ in $r$ 
is the unitary convolution of Ramanujan's sum $c_r(n)$ in $r$ and the constant 
function $1(r)$. Since these functions are multiplicative in $r$ and the unitary convolution of 
two multiplicative functions is multiplicative \cite{M}, we see that 
$\overline{c}_r(n)$ is multiplicative in $r$. 

(2) We utilize Remark \ref{re:conv-n}. The functions $\eta_{r}(n)$ and $1(n)$ are multiplicative in $n$ 
and therefore they are  semimultiplicative in $n$. We show that the function 
$\overline{\mu}_{r}\big(r/n\big)$ is semimultiplicative in $n$. 
In fact, 
$$
\overline{\mu}_{r}\big(r/n\big)
=\prod_{p} \overline{\mu}_{r}\big(p^{\nu_p(r)-\nu_p(n)}\big),
$$ 
where $\overline{\mu}_{r}(x)=0$ if $x$ is not a positive integer, that is, 
$\overline{\mu}_{r}\big(p^{\nu_p(r)-\nu_p(n)}\big)=0$ if $\nu_p(r)-\nu_p(n)<0$. 
This shows that $\overline{\mu}_{r}\big(r/n\big)$ is Selberg multiplicative in $n$, 
which means that $\overline{\mu}_{r}\big(r/n\big)$ is semimultiplicative in $n$. 
The usual product  and the Dirichlet convolution of semimultiplicative functions is 
semimultiplicative \cite{Rearick66a}. 
Therefore $\overline{c}_r(n)$ is semimultiplicative in $n$. 

(3) This follows directly from Theorem \ref{th:gen-mult}. 
\end{proof}

\begin{remark}
We know that $\overline{c}_r(n)$ is multiplicative in $r$.  
Its values at prime powers $r=p^k$ are given as 
$$
\overline{c}_{p^k}(n)=1+c_{p^k}(n)
=\begin{cases}
1+p^k-p^{k-1} & \text{if $p^k\mid n$}\\
   1-p^{k-1} & \text{if $p^{k-1}\mid n, p^k\nmid n$}\\
          1 & \text{otherwise.} 
\end{cases} 
$$
Note that for $n=0$ we have $\varrho(p^k)=1+\phi(p^k)$. 
\end{remark}

\begin{remark}\label{re:an-ram-mult}
We know that $\overline{c}_r(n)$ is semimultiplicative in $n$ $(\in\N)$. 
The smallest value of $n$ $(\in\N)$ for which $\overline{c}_r(n)\ne 0$ is $n=\prod_{p\| r} p$. 
In fact, by multiplicativity in $r$ we have 
$$
\overline{c}_r(n)=\prod_{p^k\| r} (1+c_{p^k}(n)). 
$$
If $k=1$ (i.e. $p\| r$), then $1+c_{p^k}(1)=1+\mu(p)=0$ and 
$1+c_{p^k}(p)=1+\phi(p)=p\ne 0$. 
If $k\ge 2$, then $1+c_{p^k}(1)=1+\mu(p^k)=1\ne 0$. 
This shows that 
$$
n=\prod_{p\| r} p \prod_{p^k\| r\atop k\ge 2} 1
$$ 
 is 
the smallest value of $n$ $(\in\N)$ for which $\overline{c}_r(n)\ne 0$. 
The function value of $\overline{c}_r(n)$ at $n=\prod_{p\| r} p$ is also $\prod_{p\| r} p$. 
This implies that $\overline{c}_r(n)$ is multiplicative in 
$n$ if and only if  $r$ is squareful  or $r=1$. 

We could also show that $\overline{\mu}_{r}\big(r/n\big)$ 
is semimultiplicative in $n$ such that $n=\prod_{p\| r} p$ is 
the smallest value of $n$ for which $\overline{\mu}_{r}\big(r/n\big)\ne 0$, 
and then obtain properties of $\overline{c}_r(n)$ in $n$ 
using Remark \ref{re:conv-n}. 
\end{remark}

\begin{remark}
Ramanujan's sum $c_r(n)$ is multiplicative
as a function of two variables $r$ and $n$ $(\in\N)$, multiplicative in $r$ and  
semimultiplicative in $n$ $(\in\N)$, see \cite{AA,Hau,SH}.  
The smallest value of $n$ $(\in\N)$ for which $c_r(n)\ne 0$ is $n=r/\gamma(r)$,
where $\gamma(r)$ is the product of the distinct 
prime factors of $r$. 
This follows from the property 
\begin{equation}\label{eq:ram-values}
c_{p^k}(n)
=\begin{cases}
p^k-p^{k-1} & \text{if $p^k\mid n$}\\
-p^{k-1} & \text{if $p^{k-1}\mid n, p^k\nmid n$}\\
0 & \text{otherwise.} 
\end{cases} 
\end{equation}
Ramanujan's sum $c_r(n)$ is 
quasimultiplicative in $n$ $(\in\N)$ if and only if $r$ is squarefree.
It is multiplicative in $n$ $(\in\N)$ 
if and only if $r$ is squarefree and
$\omega(r)$ is even, where $\omega(r)$ is the number of distinct prime 
factors of $r$. This follows from the property \cite[p. 90]{M}
\begin{equation}\label{eq:ram-quasi}
c_r(m)c_r(n)=\mu(r)c_r(mn)\ \ {\rm if}\ (m, n)=1.  
\end{equation}
\end{remark}

\begin{remark} It is known \cite{Si} that if $f(n, r)$ is multiplicative as a function of two
variables, then for any $r\ge 1$, $f(m, r)f(n, r)= f(1, r) f(mn, r)$ whenever $(m, n)=1$. 
Taking $f(n, r)=c_r(n)$ gives $(\ref{eq:ram-quasi})$ and taking $f(n, r)=\overline{c}_r(n)$ 
gives an analogue of $(\ref{eq:ram-quasi})$ as 
\begin{equation}\label{eq:an-ram-quasi}
\overline{c}_r(m) \overline{c}_r(n)=\overline{\mu}(r) \overline{c}_r(mn)\ \ {\rm if}\ (m, n)=1.  
\end{equation}
This implies that $\overline{c}_r(n)$ is multiplicative in 
$n$ if and only if  $r$ is squareful  or $r=1$, see also Remark \ref{re:an-ram-mult}. 
\end{remark}


\section{Mean values}

The mean value of an arithmetical function $f$ is 
$m(f)=\lim_{x\to\infty} \frac1{x}\sum_{n\le x} f(n)$ if this limit exists. 
(Here $n$ runs through the integers from $1$ to $[x]$.) 
It is known that $\sum_{n\le x} c_r(n)= O(1)$ for $r>1$. 
Thus the mean value of the function $c_r(\cdot)$ is $0$ for $r>1$, see \cite{T}. 
We here present an analogous result for $\overline{c}_r(\cdot)$.

\begin{theorem} \label{th:mean}
For any $r\ge 1$, 
\begin{equation*}
\sum_{n\le x} \overline{c}_r(n) = x + O(1).
\end{equation*}

\end{theorem}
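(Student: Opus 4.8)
The plan is to exploit Theorem~\ref{th:conv-r}, which gives $\overline{c}_r(n) = \sum_{d\|r} c_d(n)$, and reduce everything to the classical estimate $\sum_{n\le x} c_d(n) = O(1)$ for $d>1$. First I would interchange the order of summation:
\[
\sum_{n\le x} \overline{c}_r(n) = \sum_{n\le x}\sum_{d\|r} c_d(n) = \sum_{d\|r}\ \sum_{n\le x} c_d(n).
\]
Since $r$ is fixed, the outer sum over the (finitely many) unitary divisors $d$ of $r$ has a bounded number of terms, so any $O(1)$ bound that is uniform in $n$ for each fixed $d$ will survive the summation. The key point is then to isolate the single divisor $d=1$, for which $c_1(n) = 1$ for every $n$, contributing $\sum_{n\le x} 1 = \lfloor x\rfloor = x + O(1)$. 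Every other unitary divisor $d$ of $r$ satisfies $d>1$, and for those the stated classical fact $\sum_{n\le x} c_d(n) = O(1)$ applies; summing these finitely many $O(1)$ contributions again gives $O(1)$. Hence $\sum_{n\le x}\overline{c}_r(n) = x + O(1)$.

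For completeness I would briefly recall why $\sum_{n\le x} c_d(n) = O(1)$ for $d>1$: writing $c_d(n) = \sum_{e\mid(n,d)} e\,\mu(d/e)$ and interchanging sums gives $\sum_{n\le x} c_d(n) = \sum_{e\mid d} e\,\mu(d/e)\lfloor x/e\rfloor$, and replacing $\lfloor x/e\rfloor$ by $x/e + O(1)$ yields a main term $x\sum_{e\mid d}\mu(d/e) = x\cdot[\![d=1]\!] = 0$ (since $d>1$) plus an error $O(\sum_{e\mid d} e) = O(1)$ with the implied constant depending only on $d$, hence only on $r$. This makes the uniformity in $n$ explicit, which is all that is needed.

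The only mild subtlety — not really an obstacle — is bookkeeping about which divisors of $r$ are unitary and making sure the $d=1$ term is picked out exactly once; since $1\|r$ always holds and contributes the linear main term while all remaining unitary divisors are $>1$, this is immediate. No estimate here is deep; the entire content is the reduction via Theorem~\ref{th:conv-r} plus the finiteness of the divisor sum for fixed $r$.
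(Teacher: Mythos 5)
Your proof is correct, but it takes a genuinely different route from the paper's. The paper argues from Theorem~\ref{th:conv-n}: writing $\overline{c}_r(n)=\sum_{d\mid(n,r)}d\,\overline{\mu}_r(r/d)$, interchanging the sums to get $\sum_{d\mid r}d\,\overline{\mu}_r(r/d)[x/d]$, and splitting $[x/d]=x/d-\{x/d\}$, so that the main term is $x\sum_{d\mid r}\overline{\mu}_r(r/d)=x\,g_r(r)=x$ by the defining relation $(\ref{eq:mu-r})$, while the fractional-part terms contribute $O(1)$. You instead go through Theorem~\ref{th:conv-r}, $\overline{c}_r(n)=\sum_{d\,\Vert\, r}c_d(n)$, isolate the unitary divisor $d=1$ (which contributes $\lfloor x\rfloor=x+O(1)$) and invoke the classical estimate $\sum_{n\le x}c_d(n)=O(1)$ for $d>1$, which you also reprove by the same kind of partial computation the paper applies to $\overline{\mu}_r$. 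Both arguments are elementary and of the same depth; yours buys a reduction to a known fact about Ramanujan sums and makes the source of the main term transparent (only $d=1$ survives), whereas the paper's is self-contained within its $\overline{\mu}_r$ formalism and directly exhibits the coefficient $g_r(r)=1$, which it then reuses in the remarks contrasting $g_r(r)$ with $\delta_r(r)$. In fact the paper itself observes, in a remark at the end of the Dirichlet series section, that Theorem~\ref{th:mean} can alternatively be proved via Theorem~\ref{th:conv-r} and the corresponding results for $c_r(n)$ --- which is exactly the argument you give.
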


\begin{proof}
Applying  Theorem \ref{th:conv-n} we have 
\begin{align*}
    \sum_{n\le x} \overline{c}_r(n)
    &= \sum_{n\le x} \sum_{d\mid (n, r)}d \overline{\mu}_r(r/d)
    = \sum_{d\mid r} d \overline{\mu}_r(r/d) [x/d]\\
    &= x \sum_{d\mid r} \overline{\mu}_r(r/d)-\sum_{d\mid r} d\overline{\mu}_r(r/d)\{x/d\}. 
\end{align*}
Since $\sum_{d\mid r}\overline{\mu}_r(r/d)=g_r(r)=1$ and $0\le\{x/d\}<1$, we obtain the result. 
\end{proof}

\begin{corollary} 
The mean value of the function $\overline{c}_r(\cdot)$ is $1$.
\end{corollary}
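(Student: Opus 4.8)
The plan is to read off the corollary directly from Theorem \ref{th:mean}. Recall that by definition $m(\overline{c}_r) = \lim_{x\to\infty} \frac{1}{x}\sum_{n\le x} \overline{c}_r(n)$, provided this limit exists. So the only thing to do is to substitute the asymptotic formula already established and check that the limit exists and equals $1$.

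Concretely, I would write: by Theorem \ref{th:mean}, for any fixed $r\ge 1$ we have $\sum_{n\le x}\overline{c}_r(n) = x + O(1)$, where the implied constant may depend on $r$ (which is harmless since $r$ is fixed when computing the mean value). Dividing both sides by $x$ gives
\[
\frac{1}{x}\sum_{n\le x}\overline{c}_r(n) = 1 + O(1/x),
\]
and letting $x\to\infty$ the error term tends to $0$, so the limit exists and equals $1$. Hence $m(\overline{c}_r) = 1$.

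There is essentially no obstacle here: the entire content has been placed in Theorem \ref{th:mean}, and the corollary is just the translation of that $O$-estimate into the language of mean values, exactly parallel to the remark made earlier in the section that $\sum_{n\le x} c_r(n) = O(1)$ forces $m(c_r)=0$ for $r>1$. The only point worth a word of care is that the $O(1)$ in Theorem \ref{th:mean} is not uniform in $r$ (it comes from $\sum_{d\mid r} d\,\overline{\mu}_r(r/d)\{x/d\}$, which is bounded by a quantity depending on $r$), but since the mean value is computed for a fixed function $\overline{c}_r(\cdot)$, this dependence is irrelevant and the conclusion stands.
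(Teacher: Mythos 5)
Your proof is correct and is exactly how the corollary follows in the paper: it is an immediate consequence of Theorem \ref{th:mean}, obtained by dividing $\sum_{n\le x}\overline{c}_r(n)=x+O(1)$ by $x$ and letting $x\to\infty$ with $r$ fixed. Your remark that the implied constant may depend on $r$ but is harmless is a fine point of care, and nothing further is needed.
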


\begin{remark}
Note that 
$$
\sum_{n\le x} \overline{c}_r(n)=g_r(r) x+O(1), 
$$
$$
\sum_{n\le x} c_r(n)=\delta_r(r) x+O(1). 
$$
Since $\delta_r(r)=0$ for $r>1$, we have $\sum_{n\le x} c_r(n)= O(1)$ for $r>1$. 
For $r=1$, we have $\sum_{n\le x} c_r(n)= x+ O(1)$. To be more precise, 
$\sum_{n\le x} c_1(n)=[x]$. 
\end{remark}

\begin{remark}
From the proof of Theorem \ref{th:mean} we obtain 
$$
\sum_{n\le x} \overline{c}_r(n)=g_r(r) x = x\ \ {\rm for}\ \ r\mid x\in \N. 
$$
In a similar way  
$$
\sum_{n\le x} c_r(n)=\delta_r(r) x = 0\ \ {\rm for}\ 1<r\mid x\in \N,  
$$
see \cite{A1972}. 
\end{remark}


\section{Dirichlet series}

For Ramanujan's sum we have  
\begin{equation}
\sum_{n=1}^\infty
\frac{c_r(n)}{n^s} = \phi_{1-s}(r)\zeta(s) 
\end{equation}
for $\RE(s)>1$, where  $\phi_{t}(r)=\sum_{d\mid r} d^{t} \mu(r/d)$, see \cite{M,Y}.  
A similar result holds for the analogue of Ramanujan's sum with respect to 
regular integers (mod $r$).

\begin{theorem} \label{th:DS-n}
For any $r\ge 1$ and $\RE(s)>1$, 
\begin{equation*}
\sum_{n=1}^\infty
\frac{\overline{c}_r(n)}{n^s} = (\phi_{1-s}\oplus 1)(r)\zeta(s). 
\end{equation*}
\end{theorem}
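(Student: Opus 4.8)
The plan is to compute the Dirichlet series by substituting the convolutional expression for $\overline{c}_r(n)$ in the variable $n$ given by Theorem \ref{th:conv-n}, and then to recognize the resulting coefficient as $(\phi_{1-s}\oplus 1)(r)$ using Lemma \ref{le:mu-r}. Concretely, I would start from
\[
\sum_{n=1}^\infty \frac{\overline{c}_r(n)}{n^s}
= \sum_{n=1}^\infty \frac{1}{n^s} \sum_{d\mid (n,r)} d\,\overline{\mu}_r(r/d),
\]
and interchange the order of summation. Writing $n=dm$, the inner sum over $m\ge 1$ produces a factor $\zeta(s)$, so that
\[
\sum_{n=1}^\infty \frac{\overline{c}_r(n)}{n^s}
= \sum_{d\mid r} \frac{d\,\overline{\mu}_r(r/d)}{d^s} \sum_{m=1}^\infty \frac{1}{m^s}
= \zeta(s) \sum_{d\mid r} d^{1-s}\,\overline{\mu}_r(r/d).
\]
The absolute convergence needed to justify the interchange is immediate for $\RE(s)>1$, since the sum over $d\mid r$ is finite and $\sum_m m^{-s}$ converges absolutely there; I would note this briefly but not belabor it.

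It remains to identify $\sum_{d\mid r} d^{1-s}\,\overline{\mu}_r(r/d)$ with $(\phi_{1-s}\oplus 1)(r)$. This is exactly an application of Lemma \ref{le:mu-r} with the choice $f(d)=d^{1-s}$, i.e. $f=\operatorname{id}_{1-s}$ where $\operatorname{id}_t(d)=d^t$. The lemma gives
\[
\sum_{d\mid r} f(d)\,\overline{\mu}_r(r/d) = [(f*\mu)\oplus 1](r),
\]
and since $(f*\mu)(k)=\sum_{d\mid k} d^{1-s}\mu(k/d)=\phi_{1-s}(k)$ by the definition of $\phi_t$ recalled just before the theorem, we get $(f*\mu)\oplus 1 = \phi_{1-s}\oplus 1$. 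Combining this with the displayed computation above yields the claimed formula.

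I do not expect a genuine obstacle here: the two ingredients (Theorem \ref{th:conv-n} and Lemma \ref{le:mu-r}) do all the work, and the only care needed is in the bookkeeping of the substitution $n=dm$ and in matching $f=\operatorname{id}_{1-s}$ against the definition of $\phi_{1-s}$. If anything, the mildly delicate point is purely notational — making sure that ``$d^{1-s}$'' in the Dirichlet-series coefficient is the same $f$ that is fed into the lemma, so that the output $\phi_{1-s}$ (rather than, say, $\phi_{s-1}$ or $\phi_{-s}$) comes out with the correct exponent. One could alternatively bypass the lemma and instead use the convolutional form of Remark \ref{re:conv-n}, $\overline{c}_r = [\eta_r\,\overline{\mu}_r(r/\cdot)]*1$, together with multiplicativity and an Euler product, but the route through Lemma \ref{le:mu-r} is shorter and cleaner, so that is the one I would present.
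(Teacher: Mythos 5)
Your proposal is correct and follows essentially the same route as the paper: substitute the expression from Theorem \ref{th:conv-n}, interchange the (finite-by-infinite) summation to pull out $\zeta(s)$, and then apply Lemma \ref{le:mu-r} with $f(d)=d^{1-s}$ to identify the remaining divisor sum as $(\phi_{1-s}\oplus 1)(r)$. The only cosmetic difference is in how the interchange is justified (the paper notes the bound $|\overline{c}_r(n)|\le\sigma(r)$, you note that the sum over $d\mid r$ is finite), which is immaterial.
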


\begin{proof}
The function $\overline{c}_r(n)$ is bounded for any $r\ge 1$ 
(in fact, $|\overline{c}_r(n)|\le \sigma(r)$ by Theorem \ref{th:conv-n});  
hence the series $\sum_{n=1}^\infty\frac{\overline{c}_r(n)}{n^s}$ 
converges absolutely for $\RE(s)>1$. 
Therefore we can change the order of summation as 
\begin{eqnarray*}
\sum_{n=1}^\infty \frac{\overline{c}_r(n)}{n^s}
&=& \sum_{n=1}^\infty \frac{1}{n^s} \sum_{d\mid n\atop d\mid r} d \overline{\mu}_r(r/d)
 =  \sum_{d\mid r} d \overline{\mu}_r(r/d) \sum_{e=1}^\infty  \frac{1}{d^s e^s}\\
&=& \left(\sum_{d\mid r} d^{1-s} \overline{\mu}_r(r/d)\right) \zeta(s). 
\end{eqnarray*}
On the basis of Lemma \ref{le:mu-r} we have 
$$
\sum_{d\mid r} d^{1-s} \overline{\mu}_r(r/d)=(\phi_{1-s}\oplus 1)(r). 
$$
This completes the proof. 
\end{proof}

\medskip

Next we consider Ramanujan's formula
\begin{equation}
\sum_{n=1}^{\infty} \frac{c_r(n)}{n}= -\Lambda(r), \quad r>1,
\end{equation}
where $\Lambda$ is the von Mangoldt function, see \cite{A1972}.

\begin{theorem} \label{th:vonM}
For any $r\ge 1$, 
\begin{equation*}
\sum_{n\le x} 
\frac{\overline{c}_r(n)}{n} 
= \log x+C-(\Lambda\oplus 1)(r)+ O(x^{-1}),
\end{equation*}
where $C$ is Euler's constant.
\end{theorem}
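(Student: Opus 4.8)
The plan is to mimic the proof of Theorem~\ref{th:DS-n}, replacing the Dirichlet series $\sum_n n^{-s}$ by the partial sum $\sum_{m\le y} 1/m$ of the harmonic series. Starting from Theorem~\ref{th:conv-n} and interchanging the order of summation (for $d\mid r$ write $n=dm$), I would obtain
\[
\sum_{n\le x}\frac{\overline{c}_r(n)}{n}
=\sum_{d\mid r} d\,\overline{\mu}_r(r/d)\sum_{\substack{n\le x\\ d\mid n}}\frac1n
=\sum_{d\mid r}\overline{\mu}_r(r/d)\sum_{m\le x/d}\frac1m .
\]

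Next I would insert the classical estimate $\sum_{m\le y}1/m=\log y+C+O(1/y)$ with $y=x/d$, giving $\sum_{m\le x/d}1/m=\log x-\log d+C+O(d/x)$, and split the resulting expression into the four contributions coming from $\log x$, $-\log d$, $C$, and the error term. Since $r$ is fixed, the error contributes $O\bigl(x^{-1}\sum_{d\mid r}|\overline{\mu}_r(r/d)|\,d\bigr)=O(x^{-1})$, the implied constant being allowed to depend on $r$. For the $\log x$ and $C$ parts I would use the identity $\sum_{d\mid r}\overline{\mu}_r(r/d)=\sum_{d\mid r}\overline{\mu}_r(d)=(\overline{\mu}_r*1)(r)=g_r(r)=1$ (valid because $r\Vert r$), which is exactly the fact already exploited in the proof of Theorem~\ref{th:mean}; this produces precisely $\log x+C$.

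The only remaining term is $-\sum_{d\mid r}\overline{\mu}_r(r/d)\log d$. Here I would apply Lemma~\ref{le:mu-r} with $f=\log$ together with the classical identity $\mu*\log=\Lambda$ (equivalently $\sum_{d\mid n}\Lambda(d)=\log n$), which gives $\sum_{d\mid r}\log(d)\,\overline{\mu}_r(r/d)=[(\log*\mu)\oplus 1](r)=(\Lambda\oplus 1)(r)$. Collecting the four pieces yields the asserted asymptotic. I do not expect a genuine obstacle here; the points needing a little care are the uniformity convention (the $O$-constant depends on the fixed modulus $r$) and the observation that Lemma~\ref{le:mu-r} applied to $f=\log$ produces precisely the unitary analogue $\Lambda\oplus 1$ of the von Mangoldt function, in parallel with $\overline{c}_r(1)=\overline{\mu}(r)=(\mu\oplus 1)(r)$ and with Ramanujan's formula $\sum_{n=1}^\infty c_r(n)/n=-\Lambda(r)$ for $r>1$.
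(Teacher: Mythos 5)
Your proposal is correct and follows essentially the same route as the paper's own proof: interchange summation via Theorem~\ref{th:conv-n}, insert the estimate $\sum_{m\le y}1/m=\log y+C+O(1/y)$, use $\sum_{d\mid r}\overline{\mu}_r(r/d)=g_r(r)=1$, and evaluate the $\log d$ term through Lemma~\ref{le:mu-r} with $f=\log$ and $\log*\mu=\Lambda$. Nothing further is needed.
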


\begin{proof}
Applying  Theorem \ref{th:conv-n} we have  
\begin{align*}
    \sum_{n\le x} \frac{\overline{c}_r(n)}{n}
    &= \sum_{n\le x} \frac1{n} \sum_{d\mid (n,r)} d \overline{\mu}_r(r/d)
    = \sum_{d\mid r} \overline{\mu}_r(r/d) \sum_{j\le x/d} \frac1{j} \\
    &= \sum_{d\mid r} \overline{\mu}_r(r/d) \Bigl( \log(x/d)+ C + O(d/x)\Bigr) \\
    &= (\log x+C)\sum_{d\mid r} \overline{\mu}_r(r/d) - \sum_{d\mid r} \overline{\mu}_r(r/d) \log d
        + O\biggl(x^{-1} \sum_{d\mid r} d|\overline{\mu}_r(r/d)|\biggr).  
\end{align*}
Applying Lemma \ref{le:mu-r} we obtain 
$$
\sum_{n\le x} \frac{\overline{c}_r(n)}{n}
= (\log x+C) g_r(r) - [(\log *\mu)\oplus 1](r) + O(x^{-1}).
$$
Since $g_r(r)=1$ and $\log\ast\mu = \Lambda$, 
we obtain Theorem \ref{th:vonM}. 
\end{proof}

\begin{remark}
In Theorem \ref{th:vonM}, $(\Lambda\oplus 1)(r)=\log\gamma(r)$, where $\gamma(r)=\prod_{p\mid r} p$. 
\end{remark}

\begin{remark}
It follows from Theorem \ref{th:vonM} that $\sum_{n\le x} \frac{\overline{c}_r(n)}{n}$ 
tends to infinity as $x\to\infty$. 
\end{remark}

\begin{remark}
Note that the leading term of  $\sum_{n\le x} \frac{\overline{c}_r(n)}{n}$ is 
$g_r(r)(\log x+C)$, while that of \break $\sum_{n\le x} \frac{c_r(n)}{n}$ is 
$\delta_r(r)(\log x+C)$. 
Since $\delta_r(r)=0$ for $r>1$, the series $\sum_{n=1}^{\infty} \frac{c_r(n)}{n}$ converges for $r>1$, 
and since $g_r(r)=1$ for $r\ge 1$, 
the series $\sum_{n=1}^{\infty} \frac{\overline{c}_r(n)}{n}$ diverges for $r\ge 1$. 
In a similar way, the series  
$\sum_{n=1}^{\infty} \frac{c_r(n)}{n}(=\sum_{n=1}^{\infty} \frac{1}{n})$ diverges for $r=1$. 
\end{remark}

\begin{remark}
Theorems \ref{th:mean}, \ref{th:DS-n} and \ref{th:vonM} can also be proved applying 
Theorem \ref{th:conv-r} and corresponding results for Ramanujan's sum $c_r(n)$. 
 
\end{remark}

\medskip

We next present an analogue of the formula
\begin{equation}
\sum_{r=1}^\infty \frac{c_r(n)}{r^s} = \frac{1}{\zeta(s)}\frac{\sigma_{s-1}(n)}{n^{s-1}}
\end{equation}
for $n\ge 1$ and $\RE(s)>1$, where $\sigma_{s-1}(n)=\sum_{d\mid n} d^{s-1}$, see \cite{M}. 

\begin{theorem} \label{th:DS-r}
For $n\ge 1$ and $\RE(s)>1$,
\begin{equation*}
\sum_{r=1}^{\infty} \frac{\overline{c}_r(n)}{r^s} =
\end{equation*}
\begin{equation*}
=\frac{\zeta(2s)\zeta(3s)}{\zeta(6s)} \cdot
\frac{\phi_{2s}(n)\phi_{3s}(n)}{\phi_{6s}(n)}\cdot
\frac{n^{s+1}\phi_{s-1}(n)}{\phi_s(n)} \cdot \prod_{p^a\mid\mid n}
\left(1 - \frac1{p^{s-1}}\left(1-\left(1-\frac1{p^s}
\right)^2\left(1-\frac1{p^{a(s-1)}} \right) \right) \right).
\end{equation*}
\end{theorem}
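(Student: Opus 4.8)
The plan is to use that $\overline{c}_r(n)$ is multiplicative in $r$ (Theorem~\ref{th:mult}\,(1)) to split the Dirichlet series into an Euler product and then to evaluate each local factor in closed form. Since $|\overline{c}_r(n)|\le\sigma(r)$, the series converges absolutely for $\RE(s)>1$, as already noted in the proof of Theorem~\ref{th:DS-n}, so
\[
\sum_{r=1}^{\infty}\frac{\overline{c}_r(n)}{r^s}=\prod_p L_p(s),\qquad L_p(s)=\sum_{k\ge 0}\frac{\overline{c}_{p^k}(n)}{p^{ks}}.
\]
To compute $L_p(s)$ I would use $\overline{c}_1(n)=1$ together with $\overline{c}_{p^k}(n)=1+c_{p^k}(n)$ for $k\ge 1$; the latter follows from Theorem~\ref{th:conv-r}, since the only unitary divisors of $p^k$ are $1$ and $p^k$. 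Separating off the contribution of the constant $1$,
\[
L_p(s)=\Bigl(\sum_{k\ge 0}p^{-ks}-1\Bigr)+\sum_{k\ge 0}\frac{c_{p^k}(n)}{p^{ks}}=\frac{1}{1-p^{-s}}-1+F_p(s),
\]
where $F_p(s)=\sum_{k\ge 0}c_{p^k}(n)p^{-ks}$ is the $p$-th Euler factor of $\sum_r c_r(n)r^{-s}=\frac{1}{\zeta(s)}\cdot\frac{\sigma_{s-1}(n)}{n^{s-1}}$. Evaluating $F_p(s)$ directly from \eqref{eq:ram-values} (or by comparing Euler factors in that known identity) gives $F_p(s)=(1-p^{-s})\,\sigma_{s-1}(p^{\nu_p(n)})/p^{\nu_p(n)(s-1)}$, into which one substitutes $\sigma_{s-1}(p^a)=(p^{(a+1)(s-1)}-1)/(p^{s-1}-1)$.

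Next I would separate the primes with $p\mid n$ from those with $p\nmid n$. For $p\nmid n$ one has $F_p(s)=1-p^{-s}$, hence $L_p(s)=(1-p^{-s}+p^{-2s})/(1-p^{-s})$, and the elementary identity $1-x+x^2=(1-x^6)/\bigl((1-x^2)(1-x^3)\bigr)$ shows this is exactly the $p$-th Euler factor of $\zeta(2s)\zeta(3s)/\zeta(6s)$. Pulling that zeta-quotient out front and compensating over the primes dividing $n$ yields
\[
\sum_{r=1}^{\infty}\frac{\overline{c}_r(n)}{r^s}=\frac{\zeta(2s)\zeta(3s)}{\zeta(6s)}\prod_{p^a\|n}\frac{(1-p^{-s})\,L_p(s)}{1-p^{-s}+p^{-2s}},
\]
the product being over the finitely many primes $p$ with $p^a\|n$, $a\ge 1$. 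It remains to check that the $p$-th factor on the right coincides with the corresponding $p$-th factor of $\frac{\phi_{2s}(n)\phi_{3s}(n)}{\phi_{6s}(n)}\cdot\frac{n^{s+1}\phi_{s-1}(n)}{\phi_s(n)}$ times $\Bigl(1-\tfrac1{p^{s-1}}\bigl(1-(1-\tfrac1{p^s})^2(1-\tfrac1{p^{a(s-1)}})\bigr)\Bigr)$; here one uses $\phi_t(p^a)=p^{(a-1)t}(p^t-1)$, so that $\phi_{2s}(p^a)\phi_{3s}(p^a)/\phi_{6s}(p^a)$ absorbs the factor $(1-p^{-s}+p^{-2s})^{-1}$ and $n^{s+1}\phi_{s-1}(p^a)/\phi_s(p^a)$ provides the remaining powers of $p$ and $p^{s}$, the bracket being the residual rational function.

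The main obstacle is exactly this last verification. After inserting the closed form of $\sigma_{s-1}(p^a)$ into $L_p(s)$ and bringing the factor over a common denominator, one is left with a rational function of $p$ and $p^{-s}$ that must be matched term by term against the stated product; carrying the geometric-series contributions through without error, and arranging for the residual factor $1-\tfrac1{p^{s-1}}\bigl(1-(1-\tfrac1{p^s})^2(1-\tfrac1{p^{a(s-1)}})\bigr)$ to come out precisely, is the delicate point. I would organise the computation by setting $t=p^{s}$, so that everything becomes a rational identity in $t$ with $p$ and $a$ as parameters, and verify it for each $a\ge 1$, or equivalently by induction on $a=\nu_p(n)$. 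A parallel route is available through Theorem~\ref{th:conv-r}: interchanging summation in $\sum_r\overline{c}_r(n)r^{-s}=\sum_d c_d(n)\sum_{d\|r}r^{-s}$ and using $\sum_{d\|r}r^{-s}=d^{-s}\zeta(s)\prod_{p\mid d}(1-p^{-s})$ gives $\zeta(s)\sum_d c_d(n)d^{-s}\prod_{p\mid d}(1-p^{-s})$, which is again multiplicative in $d$ and is handled by the same local calculation.
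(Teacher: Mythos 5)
Your proposal is correct and follows essentially the same route as the paper: the paper reaches the Euler product $\zeta(s)\prod_p\bigl(1+(1-p^{-s})\sum_{k\ge 1}c_{p^k}(n)p^{-ks}\bigr)$ via Theorem \ref{th:conv-r} and $\sum_{(e,d)=1}e^{-s}=\zeta(s)\phi_s(d)d^{-s}$, which is exactly your $\prod_p L_p(s)$ obtained from multiplicativity in $r$ together with $\overline{c}_{p^k}(n)=1+c_{p^k}(n)$ (indeed your ``parallel route'' is literally the paper's argument), and both proofs then finish by inserting $(\ref{eq:ram-values})$ and matching the local factors against the stated closed form, a verification the paper likewise leaves to the reader. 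One slip to fix: the elementary identity you invoke should read $(1-x)(1-x^6)=(1-x^2)(1-x^3)(1-x+x^2)$, i.e. $\frac{1-x+x^2}{1-x}=\frac{1-x^6}{(1-x^2)(1-x^3)}$, so that it is the whole factor $L_p(s)=\frac{1-p^{-s}+p^{-2s}}{1-p^{-s}}$ (not just its numerator) that coincides with the Euler factor of $\zeta(2s)\zeta(3s)/\zeta(6s)$ for $p\nmid n$; as written, your identity is off by the factor $1-x$.
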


\begin{proof} It can be shown that $\sum_{r=1}^\infty\frac{\overline{c}_r(n)}{r^s}$ 
converges absolutely for $\RE(s)>1$. 
Therefore we can change the order of summation in this series. 
Applying  Theorem \ref{th:conv-r} we have  
\begin{equation*}
\sum_{r=1}^{\infty} \frac{\overline{c}_r(n)}{r^s} =
\sum_{r=1}^{\infty} \frac1{r^s} \sum_{d\mid\mid r} c_d(n) =
\sum_{d=1}^{\infty} \frac{c_d(n)}{d^s} 
\sum_{\substack{e=1\\(e,d)=1}}^{\infty} \frac1{e^s}. 
\end{equation*}
It is known \cite{Cohen61} that 
$$
\sum_{\substack{e=1\\(e,d)=1}}^{\infty} \frac1{e^s}=\zeta(s)\frac{\phi_s(d)}{d^{s}}. 
$$
Therefore
\begin{equation*}
\sum_{r=1}^{\infty} \frac{\overline{c}_r(n)}{r^s} 
= \zeta(s) \sum_{d=1}^{\infty} \frac{\phi_s(d)c_d(n)}{d^{2s}}
= \zeta(s) \prod_p \left(1+\left(1-\frac1{p^s}\right)
\sum_{k=1}^{\infty}
\frac{c_{p^k}(n)}{p^{ks}}\right). 
\end{equation*}
Applying formula $(\ref{eq:ram-values})$ for the values $c_{p^k}(n)$ we obtain Theorem
\ref{th:DS-r}.
\end{proof}

\begin{remark}
For $n=1$ we obtain the well-known formula
\begin{equation*}
\sum_{r=1}^{\infty} \frac{\overline{\mu}(r)}{r^s}
=\frac{\zeta(2s)\zeta(3s)}{\zeta(6s)},
\end{equation*}
where $\overline{c}_r(1)=\overline{\mu}(r)$ is the characteristic function
of the squareful integers.
\end{remark}

Finally we present the Dirichlet series of the generalized M\"{o}bius function $\overline{\mu}_r(n)$. 
Let $\phi_s^{\ast}(r)$ denote the unitary analogue of Jordan's totient, that is, 
\begin{equation}
\phi_{s}^{\ast}(r)=\sum_{d\| r} d^{s}\mu^{\ast}(r/d), 
\end{equation}
where $\mu^{\ast}$ is the unitary analogue of the M\"{o}bius function \cite{Cohen60}. 
In other words, 
\begin{equation}\label{eq:un-Jor}
\phi_{s}^{\ast}(r)=r^s \prod_{p^a\| r}\left(1-\frac1{p^{as}}\right). 
\end{equation}

\begin{theorem} \label{th:mu-r-DS}
For any $r\ge 1$ and $\RE(s)>1$,
\begin{equation*}
\sum_{n=1}^{\infty} \frac{\overline{\mu}_r(n)}{n^s}
= 
\frac{1}{\zeta(s)} 
\frac{\phi_{2s}^{\ast}(r)}{r^s \phi_{s}^{\ast}(r)}.
\end{equation*}
\end{theorem}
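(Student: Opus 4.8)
Here is how I would go about proving Theorem~\ref{th:mu-r-DS}.

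The plan is to reduce everything to the defining relation (\ref{eq:mu-r}), i.e.\ $\overline{\mu}_r * 1 = g_r$, which by M\"obius inversion reads $\overline{\mu}_r = g_r * \mu$. First I would note that $|\overline{\mu}_r(n)|\le 1$ for every $n$: since $\overline{\mu}_r$ is multiplicative in $n$, this is immediate from the prime-power values (i)--(iii), each of which lies in $\{-1,0,1\}$. Hence $\sum_{n\ge 1}\overline{\mu}_r(n)n^{-s}$ converges absolutely for $\RE(s)>1$; equivalently, absolute convergence there follows from $\overline{\mu}_r = g_r*\mu$ together with the facts that $\sum_{n\ge1}g_r(n)n^{-s}=\sum_{d\|r}d^{-s}$ is a finite Dirichlet polynomial and $\sum_{n\ge1}\mu(n)n^{-s}=1/\zeta(s)$ converges absolutely. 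Multiplying these two absolutely convergent series (and using that $g_r$ is the characteristic function of the unitary divisors of $r$) gives
\[
\sum_{n=1}^{\infty}\frac{\overline{\mu}_r(n)}{n^s}=\frac{1}{\zeta(s)}\sum_{d\|r}\frac{1}{d^s}.
\]

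Next I would evaluate the finite sum $\sum_{d\|r}d^{-s}$. The unitary divisors of $r$ are exactly the products $\prod_{p\in S}p^{\nu_p(r)}$ taken over subsets $S$ of the set of prime divisors of $r$, so the sum factors as $\prod_{p^a\|r}\bigl(1+p^{-as}\bigr)$. It then remains to match this with the stated closed form: invoking (\ref{eq:un-Jor}) for both $\phi^{\ast}_s$ and $\phi^{\ast}_{2s}$,
\[
\frac{\phi^{\ast}_{2s}(r)}{r^s\,\phi^{\ast}_s(r)}
=\frac{r^{2s}\prod_{p^a\|r}(1-p^{-2as})}{r^s\cdot r^s\prod_{p^a\|r}(1-p^{-as})}
=\prod_{p^a\|r}\frac{1-p^{-2as}}{1-p^{-as}}
=\prod_{p^a\|r}\bigl(1+p^{-as}\bigr),
\]
which is precisely $\sum_{d\|r}d^{-s}$; combining this with the displayed identity yields the theorem. (The factorizations are legitimate since for $\RE(s)>1$ every factor $1-p^{-as}$ is nonzero, so $\phi^{\ast}_s(r)\ne 0$.)

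An alternative, essentially equivalent route avoids the convolution identity: expand $\sum_{n\ge1}\overline{\mu}_r(n)n^{-s}$ directly as an Euler product using the multiplicativity of $\overline{\mu}_r$ in $n$, and read the local factors off (i)--(iii) --- namely $1-p^{-s}$ for $p\nmid r$, $1-p^{-2s}$ for $p\|r$, and $1-p^{-s}+p^{-as}-p^{-(a+1)s}=(1-p^{-s})(1+p^{-as})$ for $p^a\|r$ with $a\ge 2$. Noting that the factor for $p\|r$ is the $a=1$ specialization of the last one, the factors $(1-p^{-s})$ over all primes $p$ collapse to $1/\zeta(s)$, leaving again $\prod_{p^a\|r}(1+p^{-as})$. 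I do not expect any genuinely hard step; the only points that need care are the justification of absolute convergence (so that rearranging into an Euler product or into the product of the two Dirichlet series is valid) and the routine bookkeeping that identifies $\prod_{p^a\|r}(1+p^{-as})$ with $\phi^{\ast}_{2s}(r)/(r^s\phi^{\ast}_s(r))$ through (\ref{eq:un-Jor}).
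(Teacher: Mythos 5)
Your proposal is correct and follows essentially the same route as the paper: it uses the defining relation $\overline{\mu}_r*1=g_r$ (equivalently $\overline{\mu}_r=g_r*\mu$) to write the Dirichlet series as $\frac{1}{\zeta(s)}\sum_{d\|r}d^{-s}=\frac{1}{\zeta(s)}\prod_{p^a\|r}\bigl(1+p^{-as}\bigr)$ and then identifies this product with $\phi^{\ast}_{2s}(r)/\bigl(r^s\phi^{\ast}_s(r)\bigr)$ via $(\ref{eq:un-Jor})$. Your added remarks on absolute convergence and the alternative Euler-product derivation are fine but only make explicit what the paper leaves implicit.
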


\begin{proof} Applying $(\ref{eq:mu-r})$ we have 
\begin{equation*}
\sum_{n=1}^{\infty} \frac{\overline{\mu}_r(n)}{n^s}=
\frac{1}{\zeta(s)} \prod_{p^a\| r}\left(1+\frac1{p^{as}}\right).  
\end{equation*}
Now, 
Theorem \ref{th:mu-r-DS} follows by $(\ref{eq:un-Jor})$.  
\end{proof}

\begin{remark}
The Dirichlet series of the generalized M\"{o}bius function $\overline{\mu}_r(n)$ can also be written in terms
of the unitary analogues of generalized Dedekind's totient and divisor-sum function \cite{H2002} as 
\begin{equation*}
\sum_{n=1}^{\infty} \frac{\overline{\mu}_r(n)}{n^s}
= 
\frac{1}{\zeta(s)} 
\frac{\psi_{s}^{\ast}(r)}{r^s}
= 
\frac{1}{\zeta(s)} 
\frac{\sigma_{s}^{\ast}(r)}{r^s}, 
\end{equation*}
where 
\begin{equation*}
\psi_{s}^{\ast}(r)=r^s \prod_{p^a\| r}\left(1+\frac1{p^{as}}\right) 
\end{equation*}
and $\sigma_{s}^{\ast}(r)$ is the sum of the $s$th powers of the unitary divisors of $r$. 
Note that $\psi_{s}^{\ast}=\sigma_{s}^{\ast}$. 
\end{remark}

\medskip
\noindent
{\bf Acknowledgement}\ Financial support from The Magnus Ehrnrooth Foundation is greatfully 
acknowledged.

\medskip
\noindent
Pentti Haukkanen,\\ 
Department of Mathematics and Statistics\\
FI-33014 University of Tampere, Finland \\

\noindent
L\'aszl\'o T\'oth, \\
Institute of Mathematics and Informatics \\
University of P\'ecs \\
Ifj\'us\'ag u. 6 \\
7624 P\'ecs, Hungary \\

\end{document}